\newif\ifarxiv
\newif\ifarxivcolor
\newif\ifpublication
\newif\ifpublicationcolor
\tikzset{>=latex}
\newtheorem{theorem}{Theorem}[section]
\newtheorem{lemma}[theorem]{Lemma}
\theoremstyle{remark}
\numberwithin{equation}{section}
\newcommand{\elem}{\ensuremath{T}}
\newcommand{\mesh}{\ensuremath{\mathcal T}}
\newcommand{\faceSet}{\ensuremath{\mathcal F}}
\newcommand{\faceSetDir}{\ensuremath{\mathcal F^\textup D}}
\newcommand{\face}{\ensuremath{F}}
\newcommand{\skeletal}{\ensuremath{\Sigma}}
\newcommand{\skeletalSpace}{\ensuremath{\tilde M}}
\newcommand{\contElementSpace}{\ensuremath{V^\textup c}}
\newcommand{\contThreeElementSpace}[1]{\ensuremath{V^\textup c_{#1,p+3}}}
\newcommand{\linElementSpace}{\ensuremath{\overline V^\textup c}}
\newcommand{\discElementSpace}{\ensuremath{V}}
\newcommand{\polynomials}{\ensuremath{\mathcal P}}
\newcommand{\level}{\ensuremath{\ell}}
\newcommand{\iterMgOuter}{i}%\ensuremath{\nu}}
\newcommand{\iterMgInner}{m}%\ensuremath{\eta}}
\newcommand{\Div}{\nabla\!\cdot\!}
\newcommand{\extensionOp}{\ensuremath{\mathcal U^\textup c}}
\newcommand{\traceOp}{\ensuremath{\gamma_\level}}
\newcommand{\injectionOp}{\ensuremath{I}}
\newcommand{\projectionOp}{\ensuremath{P}}
\newcommand{\projectionOrthogonalOP}{\ensuremath{\Pi}}
\newcommand{\skeletalProj}{\ensuremath{\Pi^\partial}}
\newcommand{\contLinProj}{\ensuremath{\overline \Pi^\textup c}}
\newcommand{\discProj}{\ensuremath{\Pi^\textup d}}
\newcommand{\liftingOp}{\ensuremath{S}}
\renewcommand{\vec}[1]{\ensuremath{\boldsymbol{#1}}}
\newcommand{\Nu}{\ensuremath{\vec \nu}}
\newcommand{\dx}{\ensuremath{\, \textup d x}}
\newcommand{\ds}{\ensuremath{\, \textup d \sigma}}
\newcommand{\localU}{\ensuremath{\mathcal U}}
\newcommand{\localQ}{\ensuremath{\vec{\mathcal Q}}}
\newcommand{\llangle}{\ensuremath{\langle \! \langle}}
\newcommand{\rrangle}{\ensuremath{\rangle \! \rangle}}
\newcommand{\nnorm}{\ensuremath{\vert \! \vert \! \vert}}
\begin{document}

\title[HMG for EDG]{Homogeneous multigrid for embedded discontinuous Galerkin methods} 

\author{Peipei Lu}
\address{Department of Mathematics Sciences, Soochow University, Suzhou, 215006, China}
% \curraddr{}
\email{pplu@suda.edu.cn}
\thanks{P.~Lu has been supported by the Alexander von Humboldt Foundation.}

\author{Andreas Rupp}
\address{Interdisciplinary Center for Scientific Computing (IWR), Heidelberg University, Mathematikon, Im Neuenheimer Feld 205, 69120 Heidelberg, Germany}
% \curraddr{}
\email{andreas.rupp@fau.de, andreas.rupp@uni-heidelberg.de}
\thanks{This work is supported by the Deutsche Forschungsgemeinschaft (DFG, German Research Foundation) under Germany's Excellence Strategy EXC 2181/1 - 390900948 (the Heidelberg STRUCTURES Excellence Cluster).}

\author{Guido Kanschat}
\address{Interdisciplinary Center for Scientific Computing (IWR) and Mathematics Center Heidelberg (MATCH), Heidelberg University, Mathematikon, Im Neuenheimer Feld 205, 69120 Heidelberg, Germany}
% \curraddr{}
\email{kanschat@uni-heidelberg.de}
% \thanks{}

\subjclass[2010]{65F10, 65N30, 65N50}

\date{\today}

% \dedicatory{}

\begin{abstract}
 We introduce a homogeneous multigrid method in the sense that it uses the same embedded discontinuous Galerkin (EDG) discretization scheme for Poisson's equation on all levels. In particular, we use the injection operator developed in \cite{LuRK2020} for HDG and prove optimal convergence of the method under the assumption of elliptic regularity. Numerical experiments underline our analytical findings. 
 \\[1ex] \noindent \textsc{Keywords.}
 Multigird method, embedded discontinuous Galerkin, Poisson equation.
\end{abstract}
\maketitle
% 
% \begin{envarnote}{arxiv vs publication}
%  We can now use lines 14 -- 17 in the .tex file to discrimiate between the arXiv and the publication version of the manuscript. In the arXiv version, text that is within the \verb|\begin{arxiv}| and \verb|\end{arxiv}| will be displayed. If the additional option \verb|\arxivcolortrue| is used, the text will be displayed in blue. The same can be done with the option for publication and \verb|\begin{publi}| and \verb|\end{publi}| whose color is violet. Alternarively, one can also use the commands \verb|\arx{...}| and \verb|\pub{...}|. One can also print both versions within the same file. Black normal text is for both versions.
% \end{envarnote}
% 
\section{Introduction}
As described in \cite{CockburnGSS2009}, the embedded discontinuous Galerkin (EDG) method can be obtained from the hybridizable discontinuous Galerkin (HDG) methods by replacing the space for the hybrid unknown by an overall continuous space. Thus, the stiffness matrix is significantly smaller, and its size and sparsity structure coincide with those of the stiffness matrix of the statically condensed continuous Galerkin method. Additionally, the condition number of the resulting EDG system is smaller than the one of the HDG system. However, \cite{CockburnGSS2009} underlines that the computational advantage has to be balanced against the fact that the approximate solutions of the primary and flux unknowns both lose a full order of convergence.

EDG schemes and their variants have gained some popularity over the last decade. They have, for example, been successfully applied to advection--diffusion \cite{Fu2017}, Stokes \cite{Rhebergen2020}, Euler and Navier--Stokes equations \cite{PeraireNC2011, NguyenPC2015}, distributed optimal control for elliptic problems \cite{Zhang2018}, Dirichlet boundary control for advection--diffusion \cite{Chen2019}, and compared to stabilized, residual-based finite elements \cite{Kamenetskiy2016}. However, to the best of our knowledge, no multigrid method is available for EDG schemes. Thus, we propose the first (homogeneous) multigrid method for the embedded discontinuous Galerkin method.

Homogeneous multigrid methods use the same discretization scheme on all levels. Such methods are important, since they have the same mathematical properties on all levels. They are also advantageous from a computational point of view, since their data structures and execution patterns are more regular. 

Our considerations are based on the analysis techniques for multigrid methods applied to HDG discretizations. The first of these methods has been introduced in \cite{CockburnDGT2013,TanPhD}, while similar results have been obtained for hybrid Raviart--Thomas (RT) schemes in \cite{GopalakrishnanTan09}. However, all these schemes fall back to linear finite elements and therefore cannot be called ``homogeneous''. The first homogeneous multigrid method for hybrid discontinuous Galerkin schemes has finally been introduced in \cite{LuRK2020}.

Thus, the structure of the analysis conducted in this manuscript is similar to the one in \cite{LuRK2020} and uses the same notation and some results from \cite{LuRK2020}, but the proof technique demonstrated in the following is significantly different.

The remainder of this paper is structured as follows: In Section \ref{SEC:basics}, we briefly review the EDG method for the considered elliptic PDE. Furthermore, an overview over the used function spaces, scalar products, and operators is given. Section \ref{SEC:multigrid} is devoted to a brief explanation of the multigrid and states the assumptions for its main convergence result. Sections \ref{SEC:proof_a1} and \ref{SEC:proof_a2_a3} verify the assumptions of the main convergence result, while Section \ref{SEC:numerics} underlines its validity by numerical experiments. Short conclusions wrap up the paper. 
% 
% ---------------------------------------------------------------------
\section{Model equation and discretization}\label{SEC:basics}
% ---------------------------------------------------------------------
% 
We consider the Dirichlet boundary value problem for Poisson's equation
\begin{equation}
 -\Delta u = f \quad \text{ in } \Omega, \qquad u = 0 \quad \text{ on } \partial \Omega
\end{equation}
defined on a polygonally bounded domain $\Omega \subset \mathbb R^d$. The flux vector is  $\vec q = - \nabla u$.  In the analysis, we will assume elliptic regularity, namely $u \in H^2(\Omega)$ if $f \in L^2(\Omega)$, such that there is a constant $c>0$ for which
\begin{gather}
  |u|_{H^2(\Omega)}
  \le c \| f \|_{L^2(\Omega)}
\end{gather}
holds. The domain $\Omega$ is discretized by a hierarchical sequence of
triangulations $\mesh_\level$ for $\level = 1,\dots, L$. We assume
that each simplicial mesh is topologically regular such that each facet of a cell
is either a facet of another cell or on the boundary. The sequence is
shape regular in the usual sense. The sequence is constructed
recursively from a coarse mesh $\mesh_0$ by refinement, such that each
cell of the mesh $\mesh_{\level-1}$ on level $\ell-1$ is the union of
cells of $\mesh_\level$. The meshes are assumed quasi-uniform such that the typical diameter of a cell of the mesh on level $\ell$ is $h_\ell$. Finally, we assume that
refinement from one level to the next is bounded in the sense that there
is a constant $c_\text{ref} > 0$ with
\begin{gather}
  h_\level \ge c_\text{ref} h_{\level-1}.
\end{gather}

By $\faceSet_\level$ we denote the set of faces of $\mesh_\level$.
The subset of faces on the boundary is
\begin{gather}
  \faceSetDir_\level := \{\face \in \faceSet_\level : \face \subset \partial \Omega \}.
\end{gather}
Moreover, we define
$\faceSet^\elem_\level := \{ \face \in \faceSet_\level : \face \subset
\partial \elem \}$ as the set of faces of a cell
$\elem\in\mesh_\level$.  We identify the set $\faceSet_\level$ as a
set of faces with the union of these faces as a subset of
$\overline\Omega$, such that the notion of function spaces
$C^k(\faceSet_\level)$ and $L^2(\faceSet_\level)$ are meaningful. The latter is equipped with the inner product
\begin{gather}
  \llangle \lambda, \mu \rrangle_\level = \sum_{\elem \in \mesh_\level} \int_{\partial \elem} \lambda\mu\ds,
\end{gather}
and its induced norm
$\nnorm \mu \nnorm^2_\level = \llangle \mu, \mu
\rrangle_\level$.
  Note that interior faces appear twice in this definition such that expressions like $\llangle u, \mu \rrangle_\level$ with possibly discontinuous $u|_{\elem} \in H^1(\elem)$ for all $\elem \in \mesh_\level$ and $\mu \in L^2(\faceSet_\level)$ are defined without further ado.
Additionally, we define an inner product
commensurate with the $L^2$-inner product in the bulk domain, namely
\begin{gather}
  \langle \lambda, \mu \rangle_\level
  = \sum_{\elem \in \mesh_\level} \frac{|\elem|}{|\partial \elem|}
  \int_{\partial \elem} \lambda \mu \ds \cong \sum_{\face \in \faceSet_\level} h_\face
  \int_{\face} \lambda \mu \ds.
\end{gather}
Its induced norm is $ \| \mu \|^2_\level = \langle \mu, \mu \rangle_\level$.

Let $p\ge 1$ and $\polynomials_p$ be the space of multivariate
polynomials of degree up to $p$.  EDG method can be obtained from
corresponding HDG methods by replacing the HDG skeletal space by the
EDG skeletal space
\begin{equation}
\skeletalSpace_\level := \left\{ \lambda \in C^0 (\faceSet_\level) \;\middle|\;
    \begin{array}{r@{\,}c@{\,}ll}
  \lambda_{|\face} &\in& \polynomials_p & \forall \face \in \faceSet_\level, \, \face \not\subset \partial \Omega \\
  \lambda_{|\face} &=& 0 & \forall \face \in \faceSet_\level, \, \face \subset \partial \Omega   
    \end{array}
  \right\},
\end{equation}

The EDG method involves a local solver on each mesh cell
$\elem \in \mesh_\level$ which can be understood as an approximate
Dirichlet to Neumann map on each mesh cell. It is written in mixed
form, producing cellwise approximate diffusion solutions
$u_\elem \in V_\elem$ and $\vec q_\elem\in \vec W_\elem$,
respectively, by solving for given boundary values $\lambda$
\begin{subequations}\label{EQ:hdg_scheme}
\begin{align}
  \int_\elem \vec q_\elem \cdot \vec p_\elem \dx - \int_\elem u_\elem \Div \vec p_\elem \dx
  & = - \int_{\partial \elem} \lambda \vec p_\elem \cdot \Nu \ds
    \label{EQ:hdg_primary}
  \\
  - \int_\elem \vec q_\elem \cdot \nabla v_\elem \dx  + \int_{\partial \elem} ( \vec q_\elem \cdot \Nu + \tau_\level u_\elem ) v_\elem \ds
  & = \tau_\level \int_{\partial \elem} \lambda v_\elem \ds \label{EQ:hdg_flux}
\end{align}
\end{subequations}
for all $v_\elem \in V_\elem$, and all $\vec p_\elem \in \vec W_\elem$. Here,
$\Nu$ is the outward unit normal with respect to $\elem$ and $\tau_\level > 0$
is the penalty coefficient of the method.

We choose $V_\elem = \polynomials_p$. Then, choosing
$\vec W_\elem = \polynomials_p^d$ yields an analogue of the so called
hybridizable local discontinuous Galerkin (LDG-H) scheme, i.e., the
embedded local discontinuous Galerkin (LDG-E) scheme.

Our current analysis is in fact limited to this case and other choices
require a modification of Lemma \ref{LEM:lem_35} and Lemma \ref{LEM:thm_31}.

While the local solvers are implemented
cell by cell, it is helpful for the analysis to combine them by
concatenation. To this end, we introduce the spaces
\begin{gather}
  \label{EQ:dg_spaces}
  \begin{aligned}
    \discElementSpace_\level
    &:=\bigl\{ v \in L^2(\Omega)
    & \big|\;v_{|\elem} &\in V_\elem,
    &\forall \elem &\in \mesh_\level \bigr\},\\
    \vec W_\level
    &:=\bigl\{ \vec q \in L^2(\Omega;\mathbb R^d)
    & \big|\;\vec q_{|\elem} &\in \vec W_\elem,
    &\forall \elem &\in \mesh_\level \bigr\}.    
  \end{aligned}
\end{gather}
 Hence, the local solvers define a mapping
\begin{gather}
  \begin{split}
    \skeletalSpace_\level & \to \discElementSpace_\level \times \vec W_\level\\
   \lambda &\mapsto (\localU_\level \lambda, \localQ_\level \lambda),
 \end{split}
\end{gather}
where for each cell $\elem\in \mesh_\level$ holds
$\localU_\level \lambda = u_\elem$ and
$ \localQ_\level \lambda = \vec q_\elem$. In the same way, we define
operators $\localU_\level f$ and $ \localQ_\level f$ for
$f\in L^2(\Omega)$, where now the local solutions are defined by
the system
\begin{subequations}\label{EQ:hdg_f}
  \begin{align}
    \int_\elem \vec q_\elem \cdot \vec p_\elem \dx - \int_\elem u_\elem \Div \vec p_\elem \dx
    & = 0
      \label{EQ:hdg_f_primary}
    \\
    - \int_\elem \vec q_\elem \cdot \nabla v_\elem \dx  + \int_{\partial \elem} ( \vec q_\elem \cdot \Nu + \tau_\level u_\elem ) v_\elem \ds
    & =  \int_{\elem} f v_\elem \dx.
      \label{EQ:hdg_f_flux}
\end{align}
\end{subequations}

Once $\lambda$ has been computed, the EDG approximation to the solution
of the Poisson problem and its gradient on mesh $\mesh_\level$ will be
computed as
\begin{gather}
  \begin{split}
    u_\level &= \localU_\level \lambda + \localU_\level f\\
    \vec q_\level &= \localQ_\level \lambda + \localQ_\level f
  \end{split}
\end{gather}

The global coupling condition is derived through a discontinuous
Galerkin version of mass balance and reads: Find
$\lambda \in \skeletalSpace_\level$, such that for all
$ \mu \in \skeletalSpace_\level$
\begin{equation}
  \sum_{\elem \in \mesh_\level}
  \sum_{\face \in \faceSet^\elem_\level \setminus \faceSetDir_\level}
   \int_\face \left( \vec q_\level \cdot \Nu
    + \tau_\level (u_\level - \lambda)\right) \mu \ds = 0.\label{EQ:hdg_global}
\end{equation}

EDG can be formulated in a condensed version as finding $\lambda \in \skeletalSpace_\level$ such that
\begin{subequations}\label{EQ:hdg_condensed}
\begin{equation}\label{EQ:hdg_condensedP}
 a_\level(\lambda, \mu) = b (\mu) \qquad \forall \mu \in \skeletalSpace_\level
\end{equation}
with the bilinear form $a_\level(\cdot,\cdot)$ and linear form $b_\level(\cdot)$ defined by
\begin{align}
  a_\level(\lambda, \mu) = & (\localQ_\level \lambda, \localQ_\level)_{\Omega}
                             + \llangle \tau_\level (\localU_\level \lambda - \lambda),
                             (\localU_\level \mu - \mu)\rrangle_\level
                             \label{EQ:bilinear_condensed},\\
  b_\level(\mu) = & ( \localU_\level\mu, f )_\Omega,
\end{align}
\end{subequations}
where $(\cdot,\cdot)_\Omega$ is the standard inner product
in $L^2(\Omega)$ and $L^2(\Omega;\mathbb R^d)$, respectively. Note that this bilinear form is defined in the same way as the one for the HDG method. And since $\skeletalSpace_\level$ is a subspace of the corresponding space of the HDG method, it is symmetric and positive definite~\cite{CockburnGL2009}. Hence, it is a scalar product, and induces a norm denoted by $\| \cdot \|_{a_\level}$.

We  associate an operator
$A_\ell\colon \skeletalSpace_\level \to \skeletalSpace_\level$ with
the bilinear form $a_\level(\cdot,\cdot)$ by the relation
\begin{equation}\label{EQ:def_A}
 \langle A_\level \lambda, \mu \rangle_\level = a_\level(\lambda, \mu) \qquad \forall \mu \in \skeletalSpace_\level.
\end{equation}
% 
% \gkwarning{See where these should go. They seem quite unmotivated here}
  
Additionally, we introduce
\begin{align}
 \linElementSpace_\level~:=~ & \{ u \in H^1_0(\Omega) \; : \; u|_\elem \in \polynomials_1(\elem) \;\; \forall \elem \in \mesh_\level \},
\end{align}
and the $L^2$ projections
\begin{align}
 \skeletalProj_\level \colon & H^2(\Omega) \cap H^1_0(\Omega) \to \skeletalSpace_\level, && \llangle \skeletalProj_\level u , \mu \rrangle_\level = \llangle \gamma_\level u, \mu \rrangle_\level & \forall \mu \in \skeletalSpace_\level,\\
 \contLinProj_\level \colon & H^1_0(\Omega) \to \linElementSpace_\level, && ( \contLinProj_\level u, w )_0 = (u, w)_0 & \forall w \in \linElementSpace_\level,\\
 \discProj_\level \colon & H^1(\Omega) \to \discElementSpace_\level, && ( \discProj_\level u, w )_0 = (u, w)_0 & \forall w \in \discElementSpace_\level
\end{align}
with trace operator $\gamma_\level$ to be used in our analysis. Obviously, we have
\begin{align}
 \| u - \contLinProj_\level u \|_\level + \| u - \skeletalProj_\level u \|_\level ~\lesssim~ & h_\level^2 |u|_2, && \text{(trace approx.)}\\
 \| u - \discProj_\level u \|_0 + \| u - \contLinProj_\level u \|_0 ~\lesssim~ & h_\level |u|_1, && \text{($L^2$ approx.)}\\
 | \contLinProj_\level u |_1 ~\lesssim~ & |u|_1. && \text{($H^1$ stab.)}
\end{align}

Here and in the following, $\lesssim$ has the meaning of smaller than or equal to up to a constant only dependent on the regularity constant of the mesh family and $c_\text{ref}$.
% 
% \begin{envarwarning}{Do we need this sketch in the manuscript? Maybe, we should just drop it.}
% A summary of the different operators connecting the spaces can
% be found in Figure \ref{FIG:spaces_and_operators}.
% % 
% \begin{figure}[b]
%  \includegraphics[width=\textwidth]{tikz/all_operators.tikz}
%  \caption{\textcolor{red}{TODO!} Sketch of the different operators connecting function spaces of refinement levels $\level$ and $\level-1$. Here, the operators needed to implement the multigrid preconditioner $B_\level$ are depicted \textcolor{red}{red}, while operators only appearing in the analysis are in \textcolor{blue}{blue} and spaces are black. The dashed arrows commute.}\label{FIG:spaces_and_operators}
% \end{figure}
% \end{envarwarning}
%
% ---------------------------------------------------------------------
\section{Multigrid method and main convergence result}\label{SEC:multigrid}
% ---------------------------------------------------------------------
%
We consider a standard (symmetric) V-cycle multigrid method for \eqref{EQ:hdg_condensed} (cf. \cite{BramblePX1991}). Since we deal with noninherited forms, we use~\cite{DuanGTZ2007} as a base for our convergence analysis. Section \ref{SEC:multigrid_algortith} recites the multigrid method and Section \ref{SEC:main_convergence_result} states the main convergence result. First, we recall an estimate for eigenvalues and condition numbers of the matrices $A_\level$.
\begin{lemma}\label{LEM:eigenvalue_bound}
 Suppose that $\mesh_\level$ is quasiuniform. Then, there are positive constants $C_1$ and $C_2$ independent of $\level$ such that
 \begin{equation}
  C_1 \| \lambda \|^2_\level \le a_\level ( \lambda, \lambda ) \le \beta_\level C_2 h_\level^{-2} \| \lambda \|^2_\level, \qquad \forall \lambda \in \skeletalSpace_\level,
 \end{equation}
 where $\beta_\level := 1 + (\tau_\level h_\level)^2$.
\end{lemma}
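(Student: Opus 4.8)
The plan is to exploit that the EDG bilinear form $a_\level(\cdot,\cdot)$ from \eqref{EQ:bilinear_condensed} and the norm $\|\cdot\|_\level$ are given by exactly the same expressions as in the HDG method, and that the EDG skeletal space $\skeletalSpace_\level$ is a linear subspace of the HDG skeletal space $\skeletalSpaceHDG_\level$. An eigenvalue bound of precisely this form is available for the HDG operator (see \cite{LuRK2020,CockburnGL2009}). Since both inequalities in the claim are pointwise in $\lambda$ and hold for every element of the larger space $\skeletalSpaceHDG_\level$, they restrict verbatim to the subspace $\skeletalSpace_\level$ with the same constants $C_1,C_2$: the supremum of $a_\level(\lambda,\lambda)/\|\lambda\|^2_\level$ can only decrease and its infimum can only increase when passing to a subspace. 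This already proves the lemma, and the constants inherit their independence of $\level$ from the HDG estimate.

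For completeness, and in case the HDG bound is not quoted in exactly this form with $\beta_\level$, I would give the direct argument, which splits the energy as $a_\level(\lambda,\lambda) = \|\localQ_\level\lambda\|^2_0 + \tau_\level\,\nnorm \localU_\level\lambda - \lambda \nnorm^2_\level$. For the upper bound I would first derive a cellwise a priori bound for the local solver by testing \eqref{EQ:hdg_primary} with $\vec p_\elem = \localQ_\level\lambda$ and \eqref{EQ:hdg_flux} with $v_\elem = \localU_\level\lambda$ and adding the resulting identities; after integration by parts this controls $\|\localQ_\level\lambda\|_{0,\elem}$ and $\tau_\level\|\localU_\level\lambda - \lambda\|^2_{L^2(\partial\elem)}$ by boundary terms in $\lambda$. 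Mapping to the reference cell and invoking the standard inverse and trace inequalities on $\polynomials_p$ produces the factor $h_\level^{-2}$ from the flux gradient and the factor $\beta_\level = 1 + (\tau_\level h_\level)^2$ from the penalty contribution, since the dimensionless penalty on the reference cell is $\tau_\level h_\level$. Converting between the two skeletal norms via $\|\mu\|^2_\level \cong h_\level\,\nnorm \mu \nnorm^2_\level$ then yields $a_\level(\lambda,\lambda) \le \beta_\level C_2 h_\level^{-2}\|\lambda\|^2_\level$.

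The lower bound is the main obstacle. Positive definiteness of $a_\level$ on $\skeletalSpace_\level$ is already known \cite{CockburnGL2009}, so for each fixed $\level$ a coercivity constant exists; the content is its independence of $\level$ and of $\tau_\level$. I would obtain this by a scaling argument: on the reference cell the local solver is a fixed linear map, and $a_{\hat\elem}(\cdot,\cdot)$ is a positive definite quadratic form on the finite-dimensional reference trace space, so equivalence of norms in finite dimension gives $a_{\hat\elem}(\hat\lambda,\hat\lambda) \ge \hat C_1 \|\hat\lambda\|^2_{L^2(\partial\hat\elem)}$. Injectivity is checked directly: if the energy vanishes then $\localQ_\level\lambda = 0$ and $\localU_\level\lambda = \lambda$ on $\partial\elem$, which forces $\localU_\level\lambda$ to be constant on each cell and hence, with global continuity and the homogeneous Dirichlet condition built into $\skeletalSpace_\level$, $\lambda = 0$. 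Scaling back and using quasi-uniformity together with $c_\text{ref}$ transfers $\hat C_1$ to an $\level$-independent $C_1$. The delicate point I expect to need care is the uniformity of $\hat C_1$ in the reference penalty $\tau_\level h_\level$, since the lower bound carries no $\beta_\level$; this I would control by examining separately the regimes in which the flux term alone, respectively the flux and penalty terms together, dominate.
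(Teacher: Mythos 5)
Your first paragraph is exactly the paper's proof: the paper derives the lemma as a corollary of the HDG eigenvalue bound \cite[Theo.~3.2]{CockburnDGT2013}, using precisely your observation that the EDG skeletal space $\skeletalSpace_\level$ is a subspace of the HDG skeletal space, so the two-sided bound restricts verbatim with the same constants. The direct argument you sketch as a fallback is unnecessary (and the uniformity in $\tau_\level h_\level$ of the lower bound, which you rightly flag as delicate, is settled by the cited HDG result rather than needing a separate case analysis).
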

\begin{proof}
% \gkwarning{We have to argue that the condition number is not significantly smaller}
 This is a Corollary of \cite[Theo.~3.2]{CockburnDGT2013} exploiting the fact that the skeletal space of EDG is a subset of the skeletal space for HDG.
\end{proof}
This implies that for the stiffness matrix, we can bound the condition number $\kappa_\level$ by
\begin{equation}
 \kappa_\level \lesssim \beta_\level h_\level^{-2}
\end{equation}
which implies that for all choices of $\tau_\level$ satisfying $\tau_\level \lesssim h_\level^{-1}$ the condition number grows at most like $h_\level^{-2}$. %\gkerror{We only have a single-sided estimate!}.
%It can be seen from the numerical results in \cite{CockburnGSS2009} that the condition number of EDG is smaller than the one of HDG.\gkwarning{It's a subspace, so no surprise. Is this relevant?}
% 
%%%%%%%%%%%%%%%%%%%%%%%%%%%%%%%%%%%%%%%%%%%%%%%%%%%%%%%%%%%%%%%%%%%%%%%%%%%%%%%%%%%%
\subsection{The injection operator $\injectionOp_\level$}%\arnote{This still is basically the section of the HDG paper.}
\label{SEC:injection}
%%%%%%%%%%%%%%%%%%%%%%%%%%%%%%%%%%%%%%%%%%%%%%%%%%%%%%%%%%%%%%%%%%%%%%%%%%%%%%%%%%%%
% 
The difficulty of devising an ``injection operator''
$I_\level: \skeletalSpace_{\level-1} \to \skeletalSpace_\level$ originates from the fact that the finer mesh
has edges which are not refinements of the edges of the coarse
mesh.
% In \cite{TanPhD}, several possible injection operators for HDG are discussed, but turn out to be unstable. 
In order to assign reasonable values to these edges, we construct the injection operator similar to the HDG injection operator of \cite{LuRK2020} in three steps. First, we introduce the continuous finite element space
\begin{gather}
  \contElementSpace_\level := \bigl\{ u \in H^1_0(\Omega)
  \;\big|\;
  u_{|\elem} \in \polynomials_p(\elem) \quad \forall \elem \in \mesh_\level\bigr\},
  \label{EQ:cg_space}
\end{gather}
and define the shape function basis on each mesh cell $T$ by a Lagrange interpolation condition with respect to
support points $\vec x$. Afterwards, the continuous extension operator
\begin{equation}
  \extensionOp_\level\colon \skeletalSpace_{\level} \to \contElementSpace_{\level},
\end{equation}
can be defined using those interpolation conditions
\begin{gather}
  \label{EQ:define_uc}
  [\extensionOp_\level\lambda](\vec x) =
  \begin{cases}
    \lambda(\vec x) & \text{if $\vec x$ is located on a face}, \\
    [\localU_\level \lambda] (x) & \text{if $\vec x$ is in the interior of a cell}.
  \end{cases}
\end{gather}
Note that due to the continuity of the EDG method in vertices (and edges in three dimensions), no special handling of degrees of freedom there is needed and they are covered by the first line of the definition of $\extensionOp_\level\lambda$.

Since $\contElementSpace_{\level-1} \subset \contElementSpace_{\level}$, there is a natural embedding
\begin{equation}
  \begin{aligned}
    I_\level^c\colon \contElementSpace_{\level-1} &\to \contElementSpace_{\level} \\ u &\mapsto u.
  \end{aligned}
\end{equation}
On $\contElementSpace_{\level}$ the trace on edges is well defined, such that we can write
\begin{equation}
  \begin{aligned}
    \traceOp\colon \contElementSpace_{\level} &\to \skeletalSpace_{\level} \\ u &\mapsto \traceOp u.
  \end{aligned}
\end{equation}
Using these three operators, we define the injection operator $I_\level$ as their
concatenation, namely
\begin{equation}
  \begin{aligned}
    I_\level\colon \skeletalSpace_{\level-1} &\to \skeletalSpace_\level \\
    \lambda &\mapsto \traceOp I_\level^c \extensionOp_{\level-1} \lambda.
  \end{aligned}
\end{equation}

% By its definition, $I_\level$ is the operator such that this diagram commutes:
% \begin{center}
%   \includegraphics[width=.4\textwidth]{tikz/injection.tikz}
% \end{center}%

Since the EDG approximation space $\skeletalSpace_\level$ is a subspace of the HDG approximation space, the following Lemma is straightforward from \cite[Lem.~2.1]{LuRK2020}.
\begin{lemma}[Boundedness]
  \label{LEM:injection_bounded}
  When $\tau_\level = \tfrac{c}{h_\level}$, the injection operator $\injectionOp_\level$ is bounded in the sense that
  \begin{equation}
   a_\level(\injectionOp_\level \lambda , \injectionOp_\level \lambda) \lesssim a_{\level-1}(\lambda, \lambda) \qquad \forall\lambda \in \skeletalSpace_{\level-1}.
  \end{equation}
\end{lemma}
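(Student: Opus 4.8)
The plan is to prove the boundedness estimate by tracing the definition of $\injectionOp_\level = \traceOp I_\level^c \extensionOp_{\level-1}$ through three successive bounds, exploiting that the EDG skeletal space $\skeletalSpace_\level$ is a subspace of the HDG skeletal space, so that the corresponding result \cite[Lem.~2.1]{LuRK2020} can be invoked almost verbatim. First I would recall that the bilinear form $a_\level(\cdot,\cdot)$ in \eqref{EQ:bilinear_condensed} is identical to the HDG form, and that the only difference between the EDG and HDG settings is the choice of skeletal space. Since $\extensionOp_{\level-1}\lambda$ produces an element of the continuous finite element space $\contElementSpace_{\level-1}$, and the natural embedding $I_\level^c$ maps into $\contElementSpace_\level$, the argument reduces to controlling the energy of the trace $\traceOp I_\level^c \extensionOp_{\level-1}\lambda$ by the energy of $\lambda$.

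Concretely, I would proceed in three steps mirroring the operator factorization. In the first step, I would bound $a_\level(\injectionOp_\level\lambda,\injectionOp_\level\lambda)$ by a suitable $H^1$-type seminorm of the continuous function $I_\level^c\extensionOp_{\level-1}\lambda$, using Lemma~\ref{LEM:eigenvalue_bound} together with the trace approximation and stability properties of the projections listed in the excerpt. In the second step, since $I_\level^c$ is the identity embedding, that seminorm equals the corresponding seminorm of $\extensionOp_{\level-1}\lambda \in \contElementSpace_{\level-1}$. In the third step, I would relate this seminorm back to $a_{\level-1}(\lambda,\lambda)$; the key point is that the extension operator $\extensionOp_{\level-1}$, built from the local solver $\localU_{\level-1}$ through the Lagrange interpolation conditions \eqref{EQ:define_uc}, has its continuous-finite-element energy controlled by the EDG energy of the skeletal datum $\lambda$. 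This last equivalence is precisely what is established for HDG in \cite{LuRK2020}, and the subspace property $\skeletalSpace_\level \subset$ (HDG skeletal space) guarantees that every constant involved is inherited without enlargement.

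The main obstacle I anticipate is verifying that the energy of the continuous extension $\extensionOp_{\level-1}\lambda$ is genuinely bounded by the EDG bilinear form $a_{\level-1}(\lambda,\lambda)$ rather than merely by a norm of $\lambda$ alone. The subtlety is that $\extensionOp_{\level-1}$ couples the boundary datum $\lambda$ on the faces with the interior local solution $\localU_{\level-1}\lambda$ via the interpolation conditions, so one must show that the interior contribution is controlled by the local solver in a way compatible with the penalized jump term $\llangle \tau_{\level-1}(\localU_{\level-1}\lambda-\lambda),(\localU_{\level-1}\lambda-\lambda)\rrangle_{\level-1}$ appearing in \eqref{EQ:bilinear_condensed}. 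The scaling $\tau_\level = c/h_\level$ is exactly what makes these terms balance, and it is the hypothesis under which \cite[Lem.~2.1]{LuRK2020} is stated.

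Because the statement explicitly asserts that the result follows directly from \cite[Lem.~2.1]{LuRK2020}, I would keep the proof short: rather than redoing the HDG estimate, I would observe that the injection operator, the bilinear forms, and the extension operator are defined identically to their HDG counterparts once restricted to the EDG skeletal subspace, and that all intermediate quantities used in the HDG proof depend only on properties that survive passage to the subspace. Thus the conclusion
\begin{equation*}
  a_\level(\injectionOp_\level\lambda,\injectionOp_\level\lambda) \lesssim a_{\level-1}(\lambda,\lambda) \qquad \forall\lambda \in \skeletalSpace_{\level-1}
\end{equation*}
is obtained by applying the HDG boundedness estimate to $\lambda \in \skeletalSpace_{\level-1} \subset$ (HDG skeletal space) and noting that the restriction of $\extensionOp$ and $\traceOp$ to these subspaces coincides with the EDG operators, so no constant deteriorates.
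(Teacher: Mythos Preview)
Your proposal is correct and follows the same approach as the paper: both arguments reduce the EDG boundedness to the HDG result \cite[Lem.~2.1]{LuRK2020} via the subspace inclusion $\skeletalSpace_\level \subset \skeletalSpaceHDG_\level$, noting that the bilinear forms coincide and that the EDG injection agrees with the HDG injection on EDG data. The only point worth making explicit is that the EDG extension $\extensionOp_{\level-1}$ omits the vertex-averaging step present in the HDG construction, but for $\lambda \in \skeletalSpace_{\level-1}$ the continuity at vertices makes the averaging trivial, so the two extensions indeed produce the same function and the HDG estimate applies verbatim.
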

After the injection operator
  $\injectionOp_\level$ has been defined, we introduce two operators from
  $\skeletalSpace_{\level}$ to $\skeletalSpace_{\level-1}$, which
  replace the $L^2$-projection and the Ritz projection of conforming
  methods, respectively. They are $\projectionOrthogonalOP_{\level-1}$
  and $\projectionOp_{\level-1}$ defined by the conditions
\begin{xalignat}3
   \projectionOrthogonalOP_{\level-1}&\colon \skeletalSpace_\level \to \skeletalSpace_{\level-1},
   &\langle \projectionOrthogonalOP_{\level-1} \lambda, \mu \rangle_{\level-1}
   &= \langle \lambda, \injectionOp_\level \mu\rangle_\level
   && \forall \mu \in \skeletalSpace_{\level-1}.
   \label{EQ:L2_projection_definition}
   \\
   \projectionOp_{\level-1}&\colon \skeletalSpace_\level \to \skeletalSpace_{\level-1},
   &a_{\level-1}(\projectionOp_{\level-1} \lambda, \mu)
   &= a_\level(\lambda, \injectionOp_\level \mu)
   && \forall \mu \in \skeletalSpace_{\level-1},
   \label{EQ:projection_definition}
 \end{xalignat}
 The operator $\projectionOrthogonalOP_{\level-1}$ is used in the
 implementation, while $\projectionOp_{\level-1}$ is key to the
 analysis.

% ---------------------------------------------------------------------
\subsection{Multigrid algorithm}\label{SEC:multigrid_algortith}
% ---------------------------------------------------------------------
% 
Assume that we have an injection operator $I_\level\colon\skeletalSpace_{\level-1} \to \skeletalSpace_\level$ for grid transfer. Actually, this has been defined in \cref{SEC:injection}. Assume further a smoother denoted by
 \begin{gather}
   R_\level: \skeletalSpace_\level \to \skeletalSpace_\level.
 \end{gather}
  In this manuscript we consider point smoothers in terms of Jacobi or Gauss-Seidel
  iterations, respectively. Denote by $R_\level^\dagger$ the adjoint operator of
  $R_\level$ with respect to
  $\langle \cdot, \cdot \rangle_\level$ and define $R_\level^\iterMgOuter$ by
 \begin{equation}
  R_\level^\iterMgOuter = \begin{cases} R_\level & \text{ if } \iterMgOuter \text{ is odd,} \\ R_\level^\dagger & \text{ if } \iterMgOuter \text{ is even.} \end{cases}
\end{equation}
Let $\iterMgInner \ge 1$ denote the number of smoothing steps. We recursively define the multigrid operator of the refinement level $\level$
\begin{equation}
 B_\level \; : \quad \skeletalSpace_\level \to \skeletalSpace_\level.
\end{equation}
First, $B_0 = A^{-1}_0$. For $\level > 0$ and  for $\mu\in\skeletalSpace_\level$ define 
$B_\level \mu$ as follows: let $x^0 = 0 \in \skeletalSpace_\level$.
\begin{enumerate}
 \item Define $x^\iterMgOuter \in \skeletalSpace_\level$ for $\iterMgOuter = 1, \ldots, \iterMgInner$ by
 \begin{equation}
  x^\iterMgOuter = x^{\iterMgOuter-1} + R_\level^{\iterMgOuter} ( \mu - A_\level x^{\iterMgOuter-1} ).
 \end{equation}
 \item Set $y^0 = x^\iterMgInner + \injectionOp_\level q$, where $q \in \skeletalSpace_{\level-1}$ is defined as
 \begin{equation}
  q = B_{\level-1} \projectionOrthogonalOP_{\level-1} ( \mu - A_\level x^\iterMgInner).
 \end{equation}
 \item Define $y^\iterMgOuter \in \skeletalSpace_\level$ for $\iterMgOuter = 1, \ldots, \iterMgInner$ as
 \begin{equation}
  y^\iterMgOuter = y^{\iterMgOuter - 1} + R^{\iterMgOuter+\iterMgInner}_\level ( \mu - A_\level y^{\iterMgOuter-1} ).
\end{equation}
\item  Let $B_\level \mu = y^{\iterMgInner}$.
\end{enumerate}
% 
% ---------------------------------------------------------------------
\subsection{Main convergence result}\label{SEC:main_convergence_result}
% ---------------------------------------------------------------------
%
The analysis of the multigrid method is based on the framework
introduced in~\cite{DuanGTZ2007}. There, convergence is traced back to
three assumptions. Let $\underline \lambda^A_\level$ be the largest eigenvalue of $A_\level$, and
\begin{gather}
 K_\level := \bigl(1 - (1 - R_\level A_\level) (1 - R^\dagger_\level A_\level)\bigr) A^{-1}_\level.
\end{gather}
Then, there exists constants $C_1, C_2, C_3 > 0$ independent of the mesh level $\level$, such that there holds
\begin{itemize}
\item Regularity approximation assumption:
  \begin{equation}\label{EQ:precond1}
    | a_\level(\lambda - \injectionOp_\level \projectionOp_{\level-1} \lambda, \lambda) |
    \le C_1 \frac{\| A_\level \lambda \|^2_\level}{\underline \lambda^A_\level} \qquad \forall \lambda \in \skeletalSpace_\level. \tag{A1}
  \end{equation}
\item Stability of the ``Ritz quasi-projection'' $\projectionOp_{\level-1}$ and injection $\injectionOp_\level:$
 \begin{equation}\label{EQ:precond2}
  \| \lambda - \injectionOp_\level \projectionOp_{\level-1} \lambda\|_{a_\level} \le C_2 \| \lambda \|_{a_\level} \qquad \forall \lambda \in \skeletalSpace_\level. \tag{A2}
\end{equation}
\item Smoothing hypothesis:
 \begin{equation}\label{EQ:precond3}
  \frac{\| \lambda \|^2_\level}{\underline \lambda^A_\level} \le C_3 \langle K_\level \lambda, \lambda \rangle_\level. \tag{A3}
 \end{equation}
\end{itemize}
Theorem~3.1 in~\cite{DuanGTZ2007} reads
\begin{theorem}\label{TH:main_theorem}
 Assume that \eqref{EQ:precond1}, \eqref{EQ:precond2}, and \eqref{EQ:precond3} hold. Then for all $\level \ge 0$,
 \begin{equation}
  | a_\level ( \lambda - B_\level A_\level \lambda, \lambda ) | \le \delta a_\level(\lambda, \lambda),
 \end{equation}
 where
 \begin{equation}
  \delta = \frac{C_1 C_3}{\iterMgInner - C_1 C_3} \qquad \text{with} \qquad \iterMgInner > 2 C_1 C_3.
 \end{equation}
\end{theorem}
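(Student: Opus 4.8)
The plan is to prove the contraction estimate by induction on the level $\level$, tracking the $a_\level$-energy of the error-propagation operator $E_\level := 1 - B_\level A_\level$ of the symmetric V-cycle. The base case $\level = 0$ is immediate: since $B_0 = A_0^{-1}$ we have $E_0 = 0$, so the bound holds for any $\delta \ge 0$. The whole content lies in the inductive step, where I assume the level-$(\level-1)$ contraction $|a_{\level-1}(E_{\level-1}\mu, \mu)| \le \delta\, a_{\level-1}(\mu, \mu)$ for all $\mu \in \skeletalSpace_{\level-1}$ and must derive the same bound on level $\level$.

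The first step is a purely algebraic unwinding of the recursive definition of $B_\level$ from \cref{SEC:multigrid_algortith}. Writing $G_\level := \prod_{\iterMgOuter=1}^{\iterMgInner}(1 - R_\level^{\iterMgOuter}A_\level)$ for the pre-smoothing propagation operator and $G_\level^*$ for its $a_\level$-adjoint, which by the symmetric choice of $R_\level^{\iterMgOuter}$ coincides with the post-smoothing propagation, the V-cycle factorizes as
\[
  E_\level = G_\level^*\,\bigl(1 - \injectionOp_\level B_{\level-1}\projectionOrthogonalOP_{\level-1}A_\level\bigr)\,G_\level.
\]
The crucial observation is that the two coarse-level operators are linked by $\projectionOp_{\level-1} = A_{\level-1}^{-1}\projectionOrthogonalOP_{\level-1}A_\level$, which follows directly from \eqref{EQ:L2_projection_definition}, \eqref{EQ:projection_definition}, and \eqref{EQ:def_A}. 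Substituting this and abbreviating $\tilde T_\level := \injectionOp_\level\projectionOp_{\level-1}$, the coarse-grid correction splits as
\[
  1 - \injectionOp_\level B_{\level-1}\projectionOrthogonalOP_{\level-1}A_\level = (1 - \tilde T_\level) + \injectionOp_\level E_{\level-1}\projectionOp_{\level-1},
\]
where the first summand is the idealized correction controlled by \eqref{EQ:precond1}--\eqref{EQ:precond2} and the second carries the recursion governed by the inductive hypothesis.

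Next, setting $w := G_\level\lambda$ for the pre-smoothed error and using the defining adjointness $a_\level(\injectionOp_\level\nu, w) = a_{\level-1}(\nu, \projectionOp_{\level-1}w)$ read off from \eqref{EQ:projection_definition}, testing the factorized $E_\level$ against $\lambda$ yields
\[
  a_\level(E_\level\lambda, \lambda) = a_\level\bigl((1 - \tilde T_\level)w, w\bigr) + a_{\level-1}\bigl(E_{\level-1}\projectionOp_{\level-1}w, \projectionOp_{\level-1}w\bigr).
\]
The second term is bounded by $\delta\,a_{\level-1}(\projectionOp_{\level-1}w, \projectionOp_{\level-1}w) = \delta\,a_\level(\tilde T_\level w, w)$ via the inductive hypothesis. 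For the first term I invoke the regularity-approximation assumption \eqref{EQ:precond1} to obtain $a_\level((1-\tilde T_\level)w, w) \le C_1\|A_\level w\|_\level^2/\underline\lambda^A_\level$, and then the smoothing hypothesis \eqref{EQ:precond3} to bound its right-hand side by the energy that the $\iterMgInner$ sweeps have already removed, i.e.\ by a telescoping multiple of $a_\level(\lambda,\lambda) - a_\level(w,w)$. Collecting the two contributions and using the stability estimate \eqref{EQ:precond2} to keep $a_\level(\tilde T_\level w, w)$ comparable to $a_\level(\lambda, \lambda)$ produces a scalar recursion for the level constants whose fixed point is exactly $\delta = C_1 C_3/(\iterMgInner - C_1 C_3)$; the same manipulations applied to $-E_\level$ give the two-sided bound, closing the induction precisely when $\iterMgInner > 2C_1 C_3$.

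The delicate point is the coupling of the smoother with the coarse correction in the \emph{non-inherited} setting: because $a_{\level-1}$ is not the restriction of $a_\level$ to the coarse space, the coarse-grid correction is not an $a_\level$-orthogonal projection, so the usual Pythagorean splitting is unavailable, and assumption \eqref{EQ:precond2} is exactly the surrogate that prevents $\tilde T_\level$ from amplifying the energy across the transfer. The second subtlety is making the telescoping in \eqref{EQ:precond3} quantitative, showing that the $\iterMgInner$ sweeps reduce $\|A_\level w\|_\level^2/\underline\lambda^A_\level$ in proportion to $1/\iterMgInner$, which supplies the factor $\iterMgInner$ in the denominator of $\delta$ and forces the threshold $\iterMgInner > 2C_1 C_3$. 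Since the statement is quoted as \cite[Thm.~3.1]{DuanGTZ2007}, one may either carry out this induction in full or simply invoke that reference, the hypotheses \eqref{EQ:precond1}--\eqref{EQ:precond3} having been set up above precisely to match its abstract framework.
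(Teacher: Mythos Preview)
The paper does not give any proof of this theorem at all: it simply states it as ``Theorem~3.1 in~\cite{DuanGTZ2007} reads'' and then proceeds to verify the hypotheses \eqref{EQ:precond1}--\eqref{EQ:precond3}. Your proposal goes well beyond this by sketching the actual inductive argument behind the cited result---the standard V-cycle factorization $E_\level = G_\level^*\bigl((1-\tilde T_\level)+\injectionOp_\level E_{\level-1}\projectionOp_{\level-1}\bigr)G_\level$, the identity $\projectionOp_{\level-1}=A_{\level-1}^{-1}\projectionOrthogonalOP_{\level-1}A_\level$, the use of \eqref{EQ:precond1} and \eqref{EQ:precond3} for the approximation term and \eqref{EQ:precond2} together with the inductive hypothesis for the recursive term, and the resulting scalar fixed-point equation for $\delta$. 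This is the correct outline of the proof in~\cite{DuanGTZ2007}, and you yourself note in the last paragraph that one may simply invoke that reference. So relative to the present paper your proposal is strictly more detailed; relative to the underlying result it is a faithful sketch, with the only point left implicit being the precise telescoping/averaging step that turns \eqref{EQ:precond3} into the bound $C_1\|A_\level w\|_\level^2/\underline\lambda^A_\level \le (C_1C_3/\iterMgInner)\bigl(a_\level(\lambda,\lambda)-a_\level(w,w)\bigr)$, which you correctly flag as the place where the factor $\iterMgInner$ enters.
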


Thus, in order to prove uniform convergence of the multigrid method, we will now set out to verify these assumptions.
% 
% ---------------------------------------------------------------------
\section{Proof of \eqref{EQ:precond1}}\label{SEC:proof_a1}
% ---------------------------------------------------------------------
% 
To show \eqref{EQ:precond1}, we follow the lines of \cite[Sect.~4]{DuanGTZ2007} and verify the assumption of 
\begin{theorem}[Sufficient conditions for \eqref{EQ:precond1}]
 If $\tau_\level h_\level \lesssim 1$ and
 % \gkwarning{We should be more consistant whether $\le c$ or $\lesssim$}
 \begin{equation}\label{EQ:precond14}
  \| \lambda - \injectionOp_\level \projectionOp_{\level -1} \lambda \|_\level \lesssim h^2_\level \| A_\level \lambda \|_\level\tag{B1}
 \end{equation}
 holds for all $\lambda \in \skeletalSpace_\level$ and for all $\level$, \eqref{EQ:precond1} is satisfied.
\end{theorem}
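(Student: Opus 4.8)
The plan is to reduce the assertion to a single Cauchy–Schwarz estimate, so that the genuine analytic content is entirely contained in the hypothesis \eqref{EQ:precond14}. The starting point is to rewrite the left-hand side of \eqref{EQ:precond1} through the operator $A_\level$ from \eqref{EQ:def_A}. Since $a_\level(\cdot,\cdot)$ is symmetric, I would exchange the arguments and then invoke the defining relation of $A_\level$ to obtain
\[
  a_\level(\lambda - \injectionOp_\level \projectionOp_{\level-1}\lambda, \lambda)
  = a_\level(\lambda, \lambda - \injectionOp_\level \projectionOp_{\level-1}\lambda)
  = \langle A_\level \lambda,\; \lambda - \injectionOp_\level \projectionOp_{\level-1}\lambda \rangle_\level,
\]
which is legitimate because $\lambda - \injectionOp_\level \projectionOp_{\level-1}\lambda \in \skeletalSpace_\level$.

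Applying the Cauchy–Schwarz inequality in the inner product $\langle\cdot,\cdot\rangle_\level$ and inserting the hypothesis \eqref{EQ:precond14} then gives
\[
  | a_\level(\lambda - \injectionOp_\level \projectionOp_{\level-1}\lambda, \lambda) |
  \le \| A_\level \lambda \|_\level \, \| \lambda - \injectionOp_\level \projectionOp_{\level-1}\lambda \|_\level
  \lesssim h_\level^2 \, \| A_\level \lambda \|_\level^2 .
\]
It remains to convert the factor $h_\level^2$ into $1/\underline\lambda^A_\level$. Here Lemma~\ref{LEM:eigenvalue_bound} enters: the largest eigenvalue $\underline\lambda^A_\level$ of $A_\level$ equals the supremum of the Rayleigh quotient $a_\level(\lambda,\lambda)/\|\lambda\|_\level^2$, which by that lemma is bounded by $\beta_\level C_2 h_\level^{-2}$. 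Under the assumption $\tau_\level h_\level \lesssim 1$ we have $\beta_\level = 1 + (\tau_\level h_\level)^2 \lesssim 1$, hence $\underline\lambda^A_\level \lesssim h_\level^{-2}$, equivalently $h_\level^2 \lesssim 1/\underline\lambda^A_\level$. Substituting this into the previous display yields $| a_\level(\lambda - \injectionOp_\level \projectionOp_{\level-1}\lambda, \lambda) | \lesssim \| A_\level \lambda \|_\level^2 / \underline\lambda^A_\level$, which is exactly \eqref{EQ:precond1} with $C_1$ the accumulated hidden constant.

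Within this particular statement there is no real obstacle: every step is either a definition, a use of symmetry, Cauchy–Schwarz, or an application of Lemma~\ref{LEM:eigenvalue_bound}, and the only place the hypothesis $\tau_\level h_\level \lesssim 1$ is needed is to control $\beta_\level$ in the eigenvalue bound. The substantive difficulty has instead been pushed entirely into verifying the hypothesis \eqref{EQ:precond14}, that is, the $L^2$-type bound $\| \lambda - \injectionOp_\level \projectionOp_{\level-1}\lambda \|_\level \lesssim h_\level^2 \| A_\level \lambda \|_\level$. This is where the approximation and stability properties of the extension operator $\extensionOp_\level$, the trace operator $\traceOp$, and the Ritz quasi-projection $\projectionOp_{\level-1}$ — in combination with elliptic regularity — will have to be marshalled, and I expect that estimate, rather than the reduction carried out above, to be the genuine core of the argument in the section that follows.
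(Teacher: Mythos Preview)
Your proof is correct and follows essentially the same path as the paper's: rewrite $a_\level(\lambda - \injectionOp_\level \projectionOp_{\level-1}\lambda, \lambda)$ via \eqref{EQ:def_A} as an $\langle\cdot,\cdot\rangle_\level$ pairing with $A_\level\lambda$, apply Cauchy--Schwarz and \eqref{EQ:precond14}, and then use Lemma~\ref{LEM:eigenvalue_bound} together with $\tau_\level h_\level \lesssim 1$ to trade $h_\level^2$ for $1/\underline\lambda^A_\level$. The only cosmetic difference is that you make the use of symmetry of $a_\level$ explicit, whereas the paper absorbs it into the invocation of \eqref{EQ:def_A}.
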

\begin{proof}
 \begin{align}
  | a_\level (\lambda - \injectionOp_\level \projectionOp_{\level - 1} \lambda, \lambda) | \overset{\eqref{EQ:def_A}}{~=~} & | \langle \lambda - \injectionOp_\level \projectionOp_{\level - 1} \lambda, A_\level \lambda \rangle_\level |
  \\ ~\le~ & \| \lambda - \injectionOp_\level \projectionOp_{\level - 1} \lambda \|_\level \| A_\level \lambda \|_\level \overset{\eqref{EQ:precond14}} \lesssim h_\level^2 \| A_\level \lambda \|_\level^2 \notag \\
  \overset{\text{Lem.~\ref{LEM:eigenvalue_bound}}}{~\lesssim~} &\frac{\| A_\level \lambda \|_\level^2}{\underline \lambda^A_\level}.\notag
 \end{align}
\end{proof}
In order to prove \eqref{EQ:precond14}, we construct some auxiliary quantities. Let
\begin{gather}
     \contThreeElementSpace{\level}
     = \bigl\{ u \in H^1_0(\Omega) \;\big\vert\;
     u_{|\elem} \in \polynomials_{p+3}(\elem)
     \; \forall \elem \in \mesh_\level \bigr\}.
\end{gather}
For all $\lambda \in \skeletalSpace_\level$ define $\liftingOp_\level \lambda \in \contThreeElementSpace{\level}$ satisfying
\begin{subequations}
\begin{align}
 ( \liftingOp_\level \lambda, v)_\elem ~=~ & (\localU_\level \lambda, v )_\elem && \forall v \in \polynomials_p(\elem),\\
 \langle \liftingOp_\level \lambda, \eta \rangle_\face ~=~ & \langle \lambda, \eta \rangle_\face && \forall \eta \in \polynomials_{p+1} (\face), \face \subset \partial \elem,\\
 \liftingOp_\level \lambda (\vec a) ~=~ & \lambda(\vec a) && \vec a \text{ is a vertex of } \elem.
\end{align}
\end{subequations}
Actually, this is also the definition of the $H^1$-conforming finite element in \cite[Lem.~A.3]{GiraultR1986}.
By the standard scaling argument and using Lemma \ref{LEM:thm_31} when $\tau_\level h_\level \lesssim 1$, we have
\begin{equation}\label{EQ:norm_equiv}
 \| \liftingOp_\level \lambda \|_0 \cong \| \lambda \|_\level \qquad \forall \lambda \in \skeletalSpace_\level.
\end{equation}
That is, $(\liftingOp_\level \cdot , \liftingOp_\level \cdot)_0$ is an inner product on $\skeletalSpace_\level$. Thus, for all $\lambda \in \skeletalSpace_\level$, there is a $\phi_\lambda \in \skeletalSpace_\level$ such that
\begin{equation}\label{EQ:SCP_def}
 (\liftingOp_\level \phi_\lambda, \liftingOp_\level \mu)_0 = a_\level( \lambda, \mu ) = \langle A_\level \lambda, \mu \rangle_\level \qquad \forall \mu \in \skeletalSpace_\level.
\end{equation}
We denote $f_\lambda = \liftingOp_\level \phi_\lambda$, define $\tilde u$ as solution of
\begin{equation}\label{EQ:Poisson}
 - \Delta \tilde u = f_\lambda \quad \text{ in } \Omega, \qquad \tilde u = 0 \quad \text{ on } \partial \Omega
\end{equation}
and let $\tilde \lambda_\level \in \skeletalSpace_\level$ be the EDG approximation of \eqref{EQ:Poisson}, i.e.,
\begin{equation}\label{EQ:operator_Poisson}
 a_\level (\tilde \lambda_\level, \mu) = (f_\lambda, \localU_\level \mu)_0 \qquad \forall \mu \in \skeletalSpace_\level.
\end{equation}
From \eqref{EQ:norm_equiv} and \eqref{EQ:SCP_def}, we have for $\mu = \phi_\lambda$ that
\begin{equation}
 \| f_\lambda \|^2_0 = \| \liftingOp_\level \phi_\lambda \|^2_0 = \langle A_\level \lambda, \phi_\lambda \rangle_\level \le \| A_\level \lambda \|_\level \| \phi_\lambda \|_\level \lesssim \| A_\level \lambda \|_\level \| \liftingOp_\level \phi_\lambda \|_0
\end{equation}
which implies that
\begin{equation}\label{EQ:f_lambda_bound}
 \| f_\lambda \|_0 \lesssim \| A_\level \lambda \|_\level.
\end{equation}

\begin{lemma}\label{LEM:lifting_identity}
 If $w \in \linElementSpace_\level$, then
 \begin{equation}
  \liftingOp_\level \gamma_\level w = \localU_\level \gamma_\level w = w.
 \end{equation}
\end{lemma}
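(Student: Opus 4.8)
The plan is to prove the two claimed identities in order, since the second will use the first. Fix $w \in \linElementSpace_\level$ and set $\lambda := \gamma_\level w$; then $\lambda \in \skeletalSpace_\level$, because the trace of a continuous piecewise linear function is continuous, piecewise in $\polynomials_1 \subset \polynomials_p$ on the skeleton, and vanishes on $\partial\Omega$. First I would show $\localU_\level\gamma_\level w = w$ by exhibiting $(w_{|\elem}, -\nabla w_{|\elem})$ as the solution of the local problem \eqref{EQ:hdg_scheme} with datum $\lambda$ and invoking unique solvability of the LDG-E local solver. This pair is admissible: since $p\ge 1$ we have $w_{|\elem}\in\polynomials_1\subset\polynomials_p = V_\elem$, and $\nabla w_{|\elem}$ is a constant vector, so $-\nabla w_{|\elem}\in\polynomials_0^d\subset\polynomials_p^d = \vec W_\elem$. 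To verify \eqref{EQ:hdg_primary} I integrate $\int_\elem w\Div\vec p_\elem\dx$ by parts; the volume terms cancel against $\int_\elem(-\nabla w)\cdot\vec p_\elem\dx$, leaving $-\int_{\partial\elem} w\,\vec p_\elem\cdot\Nu\ds$, which matches the right-hand side because $\lambda$ is the trace of $w$ on $\partial\elem$. To verify \eqref{EQ:hdg_flux} I integrate $\int_\elem\nabla w\cdot\nabla v_\elem\dx$ by parts and use that $w$ is linear on $\elem$, hence $\Delta w = 0$; the two boundary flux contributions cancel and only $\tau_\level\int_{\partial\elem} w\,v_\elem\ds$ survives, which again equals the right-hand side. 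By uniqueness of the local solution, $\localU_\level\gamma_\level w = w$ cellwise.

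Second, I would deduce $\liftingOp_\level\gamma_\level w = w$ by checking that $w$ itself satisfies the three conditions that uniquely determine $\liftingOp_\level\gamma_\level w$. Note $w\in\contThreeElementSpace{\level}$, since $w\in H^1_0(\Omega)$ and $w_{|\elem}\in\polynomials_1\subset\polynomials_{p+3}$. The volume condition holds because $(\localU_\level\gamma_\level w, v)_\elem = (w, v)_\elem$ for all $v\in\polynomials_p(\elem)$ by the first step. The face-moment and vertex conditions are immediate from the definition of the trace: on each $\face\subset\partial\elem$ the trace $\gamma_\level w$ equals $w_{|\face}$, so its moments against $\eta\in\polynomials_{p+1}(\face)$ and its vertex values coincide with those of $w$. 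Since these degrees of freedom are unisolvent on $\contThreeElementSpace{\level}$ — exactly the $H^1$-conforming element of \cite[Lem.~A.3]{GiraultR1986} — the equality $\liftingOp_\level\gamma_\level w = w$ follows.

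The only genuine obstacle is the first step, which rests on two structural facts: $w$ is harmonic on each cell, so the consistency of the local solver reproduces it exactly, and its flux $-\nabla w$ is a constant field contained in $\vec W_\elem = \polynomials_p^d$. This is precisely where the restriction to the LDG-E choice of local spaces is used, in agreement with the earlier remark that other choices would require modifying this lemma.
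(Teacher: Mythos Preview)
Your argument is correct. The paper actually states this lemma without proof, and what you have written is precisely the natural verification: show that $(w_{|\elem},-\nabla w_{|\elem})$ solves the local problem~\eqref{EQ:hdg_scheme} with boundary datum $\gamma_\level w$ (using $\Delta w=0$ cellwise and $p\ge 1$ so that both components lie in the local spaces), and then check that $w$ itself satisfies the three unisolvent conditions defining $\liftingOp_\level\gamma_\level w$ in $\contThreeElementSpace{\level}$. One minor inaccuracy in your closing remark: the paper's comment about other choices of local spaces refers to Lemmas~\ref{LEM:lem_35} and~\ref{LEM:thm_31}, not to this lemma.
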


\begin{lemma}\label{LEM:lifting_bound}
 Assuming that $\tau_\level h_\level \lesssim 1$, we have for all $\lambda \in \skeletalSpace_\level$
 \begin{equation}
  | \liftingOp_\level \lambda |_1 \lesssim \| \localQ_\level \lambda \|_0.
 \end{equation}
\end{lemma}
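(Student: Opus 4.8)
The plan is to reduce the estimate to a single, parameter‑dependent inequality on the reference cell by an affine scaling argument, and to settle that reference inequality by a finite‑dimensional norm‑equivalence. I work cell by cell, which is legitimate because $\liftingOp_\level\lambda$ is $H^1$‑conforming, so $|\liftingOp_\level\lambda|_1^2=\sum_{\elem\in\mesh_\level}|\liftingOp_\level\lambda|_{1,\elem}^2$, while both $\localQ_\level\lambda|_\elem$ and $\liftingOp_\level\lambda|_\elem$ depend only on the local data $(\localU_\level\lambda|_\elem,\lambda|_{\partial\elem})$.

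The first step is a local identity tying the flux to the reconstruction. For any $\vec p\in\polynomials_p(\elem)^d$ I integrate by parts and use the defining conditions of $\liftingOp_\level$: since $\Div\vec p\in\polynomials_{p-1}(\elem)\subset\polynomials_p(\elem)$, the interior moment condition replaces $\liftingOp_\level\lambda$ by $\localU_\level\lambda$, and since $\vec p\cdot\Nu|_\face\in\polynomials_p(\face)\subset\polynomials_{p+1}(\face)$, the face moment condition replaces $\liftingOp_\level\lambda$ by $\lambda$ (the common weight in $\langle\cdot,\cdot\rangle_\face$ cancels, so this is just the $L^2(\face)$‑moment condition). Comparing with the local solver equation \eqref{EQ:hdg_primary} gives
\begin{equation*}
 \int_\elem \bigl(\nabla\liftingOp_\level\lambda + \localQ_\level\lambda\bigr)\cdot\vec p\dx = 0 \qquad\forall\,\vec p\in\polynomials_p(\elem)^d,
\end{equation*}
i.e.\ $\localQ_\level\lambda=-\Pi_p\nabla\liftingOp_\level\lambda$, with $\Pi_p$ the $L^2(\elem)$‑projection onto $\polynomials_p(\elem)^d$. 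In particular $\|\localQ_\level\lambda\|_{0,\elem}\le|\liftingOp_\level\lambda|_{1,\elem}$, which is the easy, opposite direction; the content of the lemma is that on the constrained set of reconstructions the high‑degree part $(I-\Pi_p)\nabla\liftingOp_\level\lambda$ is in turn controlled by $\localQ_\level\lambda$.

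Next I pass to the reference cell $\hat\elem$. Since $\liftingOp_\level\lambda$ transforms as a function value and, by the identity above, $\localQ_\level\lambda$ transforms like a gradient, both $|\liftingOp_\level\lambda|_{1,\elem}^2$ and $\|\localQ_\level\lambda\|_{0,\elem}^2$ scale as $h_\level^{d-2}$, so the inequality is scale‑invariant and it suffices to prove $|\hat S\hat\lambda|_{1,\hat\elem}\lesssim\|\hat{\vec q}\|_{0,\hat\elem}$ for the reference reconstruction $\hat S\hat\lambda$ and flux $\hat{\vec q}$, uniformly in the nondimensional penalty $\tilde\tau:=\tau_\level h_\level$. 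Both sides are seminorms of $\hat\lambda$ on the finite‑dimensional reference skeletal space, and the left is dominated by the right precisely once $\{\hat\lambda:\hat{\vec q}(\hat\lambda)=0\}\subseteq\{\hat\lambda:\hat S\hat\lambda\equiv\text{const}\}$; the equivalence constant is then finite for each fixed $\tilde\tau$, and I must check that it stays bounded as $\tilde\tau$ ranges over the interval permitted by $\tau_\level h_\level\lesssim1$ (with the natural choice $\tau_\level h_\level\cong1$ used throughout).

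The main obstacle is exactly this kernel inclusion: showing that $\localQ_\level\lambda=0$ on a cell forces $\localU_\level\lambda$ to be constant and $\lambda=\localU_\level\lambda$ on $\partial\elem$, so that by unisolvence (cf.\ Lemma \ref{LEM:lifting_identity}) $\liftingOp_\level\lambda$ is constant. Setting $\vec q=0$ in \eqref{EQ:hdg_primary} and integrating by parts yields $\int_\elem\nabla\localU_\level\lambda\cdot\vec p=\int_{\partial\elem}(\localU_\level\lambda-\lambda)\,\vec p\cdot\Nu$ for all $\vec p\in\polynomials_p(\elem)^d$, while \eqref{EQ:hdg_flux} collapses (the $\tau_\level$ cancels) to $\int_{\partial\elem}(\localU_\level\lambda-\lambda)v=0$ for all $v\in\polynomials_p(\elem)$. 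I expect these two families, used jointly, to force first $\localU_\level\lambda-\lambda\equiv0$ on $\partial\elem$ and then, via the first relation with $\vec p=\nabla\localU_\level\lambda$, $\nabla\localU_\level\lambda\equiv0$; I have checked this explicitly for $p=1$ in one dimension, where the four local equations indeed give $\lambda_0=\lambda_1=\localU_\level\lambda$ with vanishing slope. Carrying out this elimination cleanly in arbitrary dimension and degree — the delicate point being that $\lambda$ and the facewise normal traces $\vec p\cdot\Nu$ are only facewise polynomial rather than single‑cell traces — is where the real work lies; alternatively the inclusion, together with the uniform‑in‑$\tilde\tau$ equivalence constant, can be read off the positive‑definiteness underlying Lemma \ref{LEM:thm_31}.
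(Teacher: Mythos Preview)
Your projection identity $\localQ_\level\lambda=-\Pi_p\nabla\liftingOp_\level\lambda$ is correct and the scaling reduction is clean, but the argument has two gaps, one of which is structural under the stated hypothesis. The kernel inclusion you leave open actually follows at once from Lemma~\ref{LEM:lem_35} (not Lemma~\ref{LEM:thm_31}, which you invoke instead and which only bounds $\|\localU_\level\lambda\|_0$): that lemma gives $\nnorm\localU_\level\lambda-\lambda\nnorm_\level\lesssim\sqrt{h_\level}\,\|\localQ_\level\lambda\|_0$ cell by cell, so $\localQ_\level\lambda|_\elem=0$ forces $\localU_\level\lambda=\lambda$ on $\partial\elem$, after which your identity with $\vec p=\nabla\localU_\level\lambda$ yields $\localU_\level\lambda$ constant and unisolvence finishes. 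So this part is recoverable.

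The harder gap is uniformity in $\tilde\tau=\tau_\level h_\level$. The lemma assumes only $\tilde\tau\lesssim1$, so $\tilde\tau$ ranges over $(0,C]$, and your compactness argument would need the reference maps $\hat\lambda\mapsto(\hat u,\hat{\vec q},\hat S\hat\lambda)$ to extend continuously to $\tilde\tau=0$. They do not: at $\tilde\tau=0$ the local solver~\eqref{EQ:hdg_scheme} is singular (with $\lambda=0$ one gets $\hat{\vec q}=0$, but any $\hat u\in\polynomials_p$ orthogonal to $\polynomials_{p-1}$ is a solution), so both $\hat u$ and $\hat S\hat\lambda$ fail to have a limit. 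Your fallback to $\tau_\level h_\level\cong1$ thus proves a strictly weaker statement than the lemma; it does cover the regime $\tau_\level=c/h_\level$ used in the main results, but not the lemma as written. The paper avoids the issue by a different route with explicitly tracked constants: it first bounds $|\extensionOp_\level\lambda|_1\lesssim\|\localQ_\level\lambda\|_0$ via Lemma~\ref{LEM:lem_35} and the estimate $\|\localQ_\level\lambda+\nabla\localU_\level\lambda\|_0\lesssim h_\level^{-1}\|\localU_\level\lambda-\lambda\|_\level$ from~\cite{ChenLX2014}, then uses Lemma~\ref{LEM:lifting_identity} and~\eqref{EQ:norm_equiv} to compare $\liftingOp_\level\lambda$ with $\contLinProj_\level\extensionOp_\level\lambda$, and closes with an inverse inequality plus the $H^1$-stability of $\contLinProj_\level$. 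None of these estimates degenerates as $\tilde\tau\to0^+$.
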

\begin{proof}
 First, the definition of $\extensionOp_\level$ and Lemma \ref{LEM:lem_35} imply that
 \begin{equation}
  \| \extensionOp_\level \lambda - \localU_\level \lambda \|_0 \lesssim \| \extensionOp_\level \lambda - \localU_\level \lambda \|_\level = \| \lambda - \localU_\level \lambda \|_\level \lesssim h_\level \| \localQ_\level \lambda \|_0.
 \end{equation}
 Additionally, using the inverse inequality, \cite[Lem.~3.3]{ChenLX2014} (stating that $\| \localQ_\level \lambda + \nabla \localU_\level \lambda \|_0 \lesssim h_\level^{-1} \| \localU_\level \lambda - \lambda \|_\level$), and the aforementioned inequality
 \begin{align}\label{EQ:extension_bound}
  | \extensionOp_\level \lambda |_1 \lesssim & h_\level^{-1} | \localU_\level \lambda |_{1,\mesh_\level} + h^{-1}_\level \| \extensionOp_\level \lambda - \localU_\level \lambda \|_0\\
  \lesssim & \| \localQ_\level \lambda \|_0 + h_\level^{-1} \| \localU_\level \lambda - \lambda \|_\level \lesssim \| \localQ_\level \lambda \|_0.\notag
 \end{align}
 Here, $| \cdot |_{1,\mesh_\level}$ denotes the broken (i.e. elementwise) $H^1$-seminorm of an elementwise $H^1$ function. Thus,
 \begin{align}\label{EQ:lifting_bound}
  \| \liftingOp_\level \lambda - \contLinProj_\level \extensionOp_\level \lambda \|_0 \overset{\text{Lem.~\ref{LEM:lifting_identity}}}{~=~} & \| \liftingOp_\level \lambda - \liftingOp_\level \gamma_\level \contLinProj_\level \extensionOp_\level \lambda \|_0 \overset{\eqref{EQ:norm_equiv}}\lesssim \| \lambda - \gamma_\level \contLinProj_\level \extensionOp_\level \lambda \|_\level \notag\\
  ~=~ & \| \gamma_\level \extensionOp_\level \lambda - \gamma_\level \contLinProj_\level \extensionOp_\level \lambda \|_\level \lesssim \| \extensionOp_\level \lambda - \contLinProj_\level \extensionOp_\level \lambda \|_0\notag\\
  \overset{L^2 \text{approx}} \lesssim & h_\level | \extensionOp_\level \lambda |_1,
 \end{align}
 where the second inequality holds since, $\langle .,.\rangle_\level$ is commensurate with the $L^2$ inner product in the bulk domain.
 \begin{align}
  | \liftingOp_\level \lambda |_1 ~\le~ & | \liftingOp_\level \lambda - \contLinProj_\level \extensionOp_\level \lambda |_1 + | \contLinProj_\level \extensionOp_\level \lambda |_1\\
  \overset{\text{inv. ineq.}}{\underset{\text{$H^1$ stab.}}{\lesssim}} & h_\level^{-1} \| \liftingOp_\level \lambda - \contLinProj_\level \extensionOp_\level \lambda \|_0 + | \extensionOp_\level \lambda |_1
  \overset{\eqref{EQ:lifting_bound}} \lesssim~& | \extensionOp_\level \lambda |_1 \overset{\eqref{EQ:extension_bound}} \lesssim \| \localQ_\level \lambda \|_0.\notag
 \end{align}
\end{proof}
\begin{lemma}\label{LEM:tilde_lambda}
 When $\tau_\level = \tfrac{c}{h_\level}$, let for $\lambda \in \skeletalSpace_\level$ $\tilde \lambda_\level \in \skeletalSpace_\level$ be the solution of \eqref{EQ:operator_Poisson}. We have
 \begin{align}
  \| \lambda - \tilde \lambda_\level \|_{a_\level} ~\lesssim~ &  h_\level \| A_\level \lambda \|_\level,\\
  \| \lambda - \tilde \lambda_\level \|_\level ~\lesssim~ & h_\level^2 \| A_\level \lambda \|_\level.
 \end{align}
\end{lemma}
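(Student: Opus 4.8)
The plan is to set $e := \lambda - \tilde\lambda_\level$ and prove the two bounds in turn, the first by a direct energy argument and the second by an Aubin--Nitsche duality that boosts the first by one power of $h_\level$.

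\emph{Energy estimate.} From \eqref{EQ:SCP_def} one has $a_\level(\lambda,\mu) = (f_\lambda, \liftingOp_\level\mu)_0$; subtracting \eqref{EQ:operator_Poisson} gives the orthogonality relation
\[
 a_\level(e, \mu) = (f_\lambda, \liftingOp_\level\mu - \localU_\level\mu)_0 \qquad \forall \mu \in \skeletalSpace_\level .
\]
Since $V_\elem = \polynomials_p$, the moment conditions defining $\liftingOp_\level$ give $\discProj_\level \liftingOp_\level\mu = \localU_\level\mu$, so $\liftingOp_\level\mu - \localU_\level\mu = \liftingOp_\level\mu - \discProj_\level\liftingOp_\level\mu$. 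The $L^2$ approximation property then yields $\|\liftingOp_\level\mu - \localU_\level\mu\|_0 \lesssim h_\level |\liftingOp_\level\mu|_1$, while Lemma~\ref{LEM:lifting_bound} together with the nonnegativity of the penalty term in \eqref{EQ:bilinear_condensed} gives $|\liftingOp_\level\mu|_1 \lesssim \|\localQ_\level\mu\|_0 \le \|\mu\|_{a_\level}$. Testing the orthogonality relation with $\mu = e$, using Cauchy--Schwarz and dividing by $\|e\|_{a_\level}$, I obtain $\|e\|_{a_\level} \lesssim h_\level \|f_\lambda\|_0$; then \eqref{EQ:f_lambda_bound} finishes the first estimate.

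\emph{Duality.} For the second estimate I would run an Aubin--Nitsche argument. By \eqref{EQ:norm_equiv}, $\|e\|_\level \cong \|\liftingOp_\level e\|_0$, so it suffices to bound $\|\liftingOp_\level e\|_0^2 = (\liftingOp_\level e, g)_0$ with $g := \liftingOp_\level e \in H^1_0(\Omega)$. Let $z \in H^2(\Omega) \cap H^1_0(\Omega)$ solve $-\Delta z = g$, so elliptic regularity gives $|z|_2 \lesssim \|g\|_0 \cong \|e\|_\level$, and let $z_\level \in \skeletalSpace_\level$ be its EDG approximation, $a_\level(z_\level, \mu) = (g, \localU_\level\mu)_0$ for all $\mu$. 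Writing $\liftingOp_\level e = \localU_\level e + (\liftingOp_\level e - \localU_\level e)$ and using the definition of $z_\level$ with $\mu = e$, I split
\[
 \|\liftingOp_\level e\|_0^2 = a_\level(z_\level, e) + (\liftingOp_\level e - \localU_\level e, g)_0 .
\]
The second term is handled exactly as in the energy step: $\|\liftingOp_\level e - \localU_\level e\|_0 \lesssim h_\level |\liftingOp_\level e|_1 \lesssim h_\level \|e\|_{a_\level} \lesssim h_\level^2 \|A_\level \lambda\|_\level$, so it is $\lesssim h_\level^2 \|A_\level\lambda\|_\level \|g\|_0$.

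The first term is where the difficulty sits. By symmetry of $a_\level$ and the orthogonality relation, $a_\level(z_\level, e) = a_\level(e, z_\level) = (f_\lambda, \liftingOp_\level z_\level - \localU_\level z_\level)_0 \le \|f_\lambda\|_0 \, \|\liftingOp_\level z_\level - \localU_\level z_\level\|_0$. The naive bound $\|\liftingOp_\level z_\level - \localU_\level z_\level\|_0 \lesssim h_\level|\liftingOp_\level z_\level|_1$ only produces an $O(h_\level)$ factor, one power short of the target; the inverse inequality on the (generally rough) datum $f_\lambda$ shows this cannot be repaired by moving derivatives onto $f_\lambda$. The extra power must instead come from the \emph{smoothness of the dual solution}: both $\localU_\level z_\level$ and its lift $\liftingOp_\level z_\level$ are second-order $L^2$-accurate approximations of $z \in H^2(\Omega)$ (the standard a priori $L^2$ estimate for the EDG scalar variable and its reconstruction, valid for $p \ge 1$), whence $\|\liftingOp_\level z_\level - \localU_\level z_\level\|_0 \lesssim h_\level^2 |z|_2$. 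Establishing or citing this a priori $L^2$ estimate is the main obstacle; granting it, the first term is $\lesssim \|f_\lambda\|_0\, h_\level^2 |z|_2 \lesssim h_\level^2 \|A_\level\lambda\|_\level \|g\|_0$ via \eqref{EQ:f_lambda_bound} and elliptic regularity. Summing the two contributions gives $\|\liftingOp_\level e\|_0^2 \lesssim h_\level^2 \|A_\level\lambda\|_\level \|\liftingOp_\level e\|_0$, and dividing yields $\|e\|_\level \lesssim h_\level^2 \|A_\level\lambda\|_\level$, as claimed.
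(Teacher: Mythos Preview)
Your energy estimate is exactly the paper's argument and is correct.

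For the duality step, your overall architecture (define the adjoint problem, use the lift to translate $\|\cdot\|_\ell$ into $\|\liftingOp_\level\cdot\|_0$, split and exploit the error equation) matches the paper's, but the way you close the ``first term'' $a_\level(z_\level,e)$ leaves a genuine gap. You reduce it to $\|f_\lambda\|_0\,\|\liftingOp_\level z_\level-\localU_\level z_\level\|_0$ and then \emph{assume} the bulk a~priori bound $\|\liftingOp_\level z_\level-\localU_\level z_\level\|_0\lesssim h_\level^2|z|_2$. This is not a standard EDG estimate you can cite: the paper only establishes the \emph{skeletal} convergence $\|z_\level-\skeletalProj_\level z\|_\level\lesssim h_\level^2|z|_2$ (Lemma~\ref{LEM:EDG_conv}), not separate $O(h_\level^2)$ bulk $L^2$ bounds for $\localU_\level z_\level$ or $\liftingOp_\level z_\level$. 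So as written, the argument is incomplete.

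The paper closes this differently, and the missing idea is the Galerkin-type orthogonality against \emph{linear} traces: by Lemma~\ref{LEM:lifting_identity}, $\liftingOp_\level\gamma_\level w=\localU_\level\gamma_\level w$ for $w\in\linElementSpace_\level$, so the error relation \eqref{EQ:tildelambdadifference} gives $a_\level(e,\gamma_\level w)=0$ for all such $w$. The paper uses this to replace $a_\level(\rho_\level,e)$ by $a_\level(e,\rho_\level-\gamma_\level\contLinProj_\level\psi)$, then splits into $a_\level(\rho_\level-\tilde\rho_\level,e)$ (bounded via the energy argument applied to the dual, your ``second term'' plays the analogous role) and $a_\level(\tilde\rho_\level-\gamma_\level\contLinProj_\level\psi,e)$, which is handled by Lemma~\ref{LEM:eigenvalue_bound} and the skeletal convergence Lemma~\ref{LEM:EDG_conv}. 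This avoids any need for a bulk $L^2$ a~priori estimate. In fact, if you insert the same orthogonality into your decomposition, writing $a_\level(e,z_\level)=a_\level(e,z_\level-\gamma_\level\contLinProj_\level z)$ and estimating by $\|e\|_{a_\level}\,h_\level^{-1}\|z_\level-\gamma_\level\contLinProj_\level z\|_\level\lesssim h_\level\|f_\lambda\|_0\cdot h_\level|z|_2$, your argument closes as well---so the fix is precisely the paper's key step. (Your claimed a~priori bound is in fact true, but deriving it requires the same ingredients, Lemma~\ref{LEM:lifting_identity} plus Lemma~\ref{LEM:EDG_conv}, so it is not a shortcut.)
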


To prove this result, we need the following EDG convergence result
\begin{lemma}\label{LEM:EDG_conv}
 When $\tau_\level = \tfrac{c}{h_\level}$, let for $\tilde \lambda_\level \in \skeletalSpace_\level$ be the solution of \eqref{EQ:operator_Poisson}. We have
 \begin{equation}
  \| \tilde \lambda_\level - \skeletalProj_\level \tilde u\|_\level \lesssim h^2_\level |\tilde u|_2
 \end{equation}
\end{lemma}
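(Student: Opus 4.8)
The plan is to prove the estimate by an Aubin--Nitsche duality argument, exploiting that the underlying continuous problem \eqref{EQ:Poisson} is self-adjoint and that EDG converges at first order in the energy norm under $H^2$ regularity. Write $e := \tilde\lambda_\level - \skeletalProj_\level \tilde u \in \skeletalSpace_\level$ for the quantity to be bounded. Since the norm $\|\cdot\|_\level$ is commensurate with the bulk $L^2$ norm via \eqref{EQ:norm_equiv}, I would represent $\|e\|_\level^2$ through a dual Poisson problem: introduce $z \in H^2(\Omega)\cap H^1_0(\Omega)$ solving $-\Delta z = g_e$ with homogeneous Dirichlet data, where $g_e$ is a bulk representative of $e$ (built from $\liftingOp_\level e$) chosen so that the right-hand side functional $(g_e, \localU_\level \, \cdot \,)_0$ of the associated EDG problem reproduces $\langle e, \cdot \rangle_\level$ up to controllable terms. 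Elliptic regularity then yields $|z|_2 \lesssim \|g_e\|_0 \cong \|e\|_\level$, and this is the factor that will pay for the second power of $h_\level$.

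First I would set up the discrete dual solution $z_\level \in \skeletalSpace_\level$ as the EDG approximation of $z$, i.e. $a_\level(z_\level, \mu) = (g_e, \localU_\level \mu)_0$ for all $\mu \in \skeletalSpace_\level$, and rewrite $\|e\|_\level^2$ as an $a_\level$-pairing of the primal error with the dual error. Testing \eqref{EQ:operator_Poisson} against $z_\level$ and the dual equation against $e$, and using the self-adjointness of $a_\level(\cdot,\cdot)$ together with Lemma \ref{LEM:lifting_identity} and \eqref{EQ:norm_equiv} to pass between skeletal traces and their bulk representatives, I expect $\|e\|_\level^2$ to collapse into a sum of products in which a primal approximation error multiplies a dual approximation error. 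The primal factor is the energy distance of $\tilde\lambda_\level$ (equivalently of $\skeletalProj_\level \tilde u$) from $\gamma_\level \tilde u$; the dual factor is the analogous distance for $z$.

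Then I would bound each factor by $O(h_\level)$. For the primal factor I would combine the trace- and $L^2$-approximation properties, the $H^1$-stability of $\contLinProj_\level$, the flux identity $\|\localQ_\level \lambda + \nabla \localU_\level \lambda\|_0 \lesssim h_\level^{-1}\|\localU_\level \lambda - \lambda\|_\level$ from \cite{ChenLX2014}, and Lemma \ref{LEM:lifting_bound}, so that the primal energy error is $\lesssim h_\level |\tilde u|_2$. The dual factor is handled identically and is $\lesssim h_\level |z|_2 \lesssim h_\level \|e\|_\level$. Multiplying, $\|e\|_\level^2 \lesssim h_\level^2 |\tilde u|_2 \, \|e\|_\level$, and dividing by $\|e\|_\level$ gives the claim.

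The main obstacle is the first half of the second paragraph: because the form $a_\level(\cdot,\cdot)$ is noninherited and the scheme mixes three different pairings --- the skeletal $\langle \cdot, \cdot \rangle_\level$, the $\llangle \cdot, \cdot \rrangle_\level$ used to define $\skeletalProj_\level$, and the bulk pairing $(\,\cdot\,, \localU_\level \, \cdot\,)_0$ on the right-hand side of \eqref{EQ:operator_Poisson} --- exact Galerkin orthogonality is unavailable. I would therefore have to isolate the consistency terms produced by these mismatches (in particular where $\skeletalProj_\level$ meets $\localU_\level$) and show, again via \eqref{EQ:norm_equiv} and Lemmas \ref{LEM:lem_35}, \ref{LEM:thm_31}, that they are of the same $O(h_\level^2)$ order and do not spoil the bound. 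Choosing the dual data $g_e$ so that elliptic regularity delivers exactly the factor $\|e\|_\level$ without losing a power of $h_\level$ is the delicate point that makes the duality close.
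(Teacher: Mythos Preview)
Your approach is quite different from the paper's and considerably more laborious. The paper does not run a duality argument at all: it simply invokes two off-the-shelf EDG error estimates from \cite{CockburnGSS2009}. First, their Lemma~3.1 bounds the flux error
\[
\|\tilde{\vec q}-\tilde{\vec q}_\level\|_0^2 \lesssim \|\tilde{\vec q}-\Pi_\level^{\textup{RT}}\tilde{\vec q}\|_0^2 + |I_\level^{\textup{int}}\tilde u-\tilde u|_1^2 + \tau_\level^{-1}\nnorm \Pi_\level^{\textup{RT}}\tilde{\vec q}-\skeletalProj_\level \tilde{\vec q}\nnorm_\level^2,
\]
each term of which is $O(h_\level^2|\tilde u|_2^2)$ when $\tau_\level=c/h_\level$. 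Second, their Theorem~2.3 gives $\|\tilde\lambda_\level-\skeletalProj_\level\tilde u\|_\level \lesssim h_\level\|\tilde{\vec q}-\tilde{\vec q}_\level\|_0$. Chaining the two yields the claim in two lines.

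Your plan would in effect reprove those results from scratch. That is not wrong in spirit, but there is a genuine gap at the ``primal factor'' step: you assert that the primal energy error is $\lesssim h_\level|\tilde u|_2$ using only the trace/$L^2$ approximation properties, the identity from \cite{ChenLX2014}, and Lemma~\ref{LEM:lifting_bound}. Those tools relate different discrete quantities to one another (e.g.\ $|\liftingOp_\level\lambda|_1\lesssim\|\localQ_\level\lambda\|_0$), but none of them compares the EDG solution to the continuous solution. Obtaining $\|\localQ_\level\tilde\lambda_\level+\nabla\tilde u\|_0\lesssim h_\level|\tilde u|_2$ is precisely the content of the EDG a~priori analysis in \cite{CockburnGSS2009}, which needs a consistency/coercivity argument for the full mixed system \eqref{EQ:hdg_scheme}--\eqref{EQ:hdg_global} that you have not outlined. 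Without that ingredient your duality loop does not close, and once you have it the additional Aubin--Nitsche step you propose is unnecessary because Theorem~2.3 of \cite{CockburnGSS2009} already supplies the extra factor of $h_\level$ directly.
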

\begin{proof}
 Lemma 3.1 in \cite{CockburnGSS2009} states that
 \begin{equation*}
  \| \tilde{\vec q} - \tilde{\vec q}_\level \|^2_0 \lesssim \underbrace{ \| \tilde{\vec q} - \Pi_\level^\textup{RT} \tilde{\vec q} \|^2_0 }_{ \lesssim h^2_\level |\tilde{\vec q}|^2_1 } + \underbrace{ | I_\level^\textup{int} \tilde u - \tilde u |^2_1 }_{ \lesssim h^2_\level |\tilde u|^2_2 } + \frac{1}{\tau_\level} \underbrace{ \nnorm \Pi_\level^\textup{RT} \tilde{\vec q} - \skeletalProj_\level \tilde{\vec q} \nnorm^2_\level }_{ \lesssim h_\level | \tilde{\vec q} |^2_1 },
 \end{equation*}
 where $\tilde{\vec q} = - \nabla \tilde u$, $\Pi_\level^\textup{RT}$ is similar to the standard Raviart--Thomas projection, but has fewer constraints, cf.~\cite[(3.2)]{CockburnGSS2009}. That is, for all $\elem \in \mesh_\level$, the projection $\Pi_\level^\textup{RT}$ suffices
 \begin{align}
  (\Pi_\level^\textup{RT} \tilde{\vec q}, \vec v)_\elem =& (\tilde{\vec q}, \vec v)_\elem && \forall \vec v \in \polynomials^d_{p-1}(\elem), \\
  \langle \Pi_\level^\textup{RT} \tilde{\vec q} \cdot \Nu, \eta \rangle_\face =& \langle \tilde{\vec q} \cdot \Nu, \eta \rangle_\face && \forall \eta \in \polynomials_p(\face),
 \end{align}
 for all $\face \subset \partial \elem$, but one. $I_\level^\textup{int}$ is the continuous interpolant obeying the Dirichlet constraints. Plugging this into \cite[Theo.~2.3]{CockburnGSS2009}, we obtain
 \begin{equation*}
  \| \tilde \lambda_\level - \skeletalProj_\level \tilde u\|_\level \lesssim h_\level \| \tilde{\vec q} - \tilde{\vec q}_\level \|_0 \lesssim h_\level^2 |\tilde u|_2.
 \end{equation*}
\end{proof}
\begin{proof}[Proof of Lemma \ref{LEM:tilde_lambda}]
 Since
 \begin{equation}
  a_\level(\lambda, \mu) = (f_\lambda, \liftingOp_\level \mu)_0 \quad \text{and} \quad a_\level (\tilde \lambda_\level, \mu ) = (f_\lambda, \localU_\level \mu)_0,
 \end{equation}
 we have
 \begin{equation}\label{EQ:tildelambdadifference}
  a_\level (\lambda - \tilde \lambda_\level, \mu) = (f_\lambda, \liftingOp_\level \mu - \localU_\level \mu)_0.
 \end{equation}
 This yields for all $\mu \in \skeletalSpace_\level$
 \begin{align}\label{EQ:liftung_u_diff}
  \| \liftingOp_\level \mu - \localU_\level \mu \|_0 ~=~ & \| \liftingOp_\level \mu - \discProj_\level \liftingOp_\level \mu \|_0\\
  \overset{L^2 \text{ approx.}} \lesssim~ & h_\level | \liftingOp_\level \mu |_1 \overset{\text{Lem.~}\ref{LEM:lifting_bound}} \lesssim h_\level \| \localQ_\level \mu \|_0 \le h_\level \| \mu \|_{a_\level}.\notag
 \end{align}
 Setting $\mu = \lambda - \tilde \lambda_\level$ in \eqref{EQ:tildelambdadifference}, we get
 \begin{equation}
  \| \lambda - \tilde \lambda_\level \|^2_{a_\level} \lesssim \| f_\lambda \|_0 \; h_\level \| \lambda - \tilde \lambda_\level \|_{a_\level}.
 \end{equation}
 This also implies that
 \begin{equation}
  \| \lambda - \tilde \lambda_\level \|_{a_\level} \lesssim h_\level \| f_\lambda \|_0 \overset{\eqref{EQ:f_lambda_bound}} \lesssim h_\level \| A_\level \lambda \|_\level.
 \end{equation}
 This is the first inequality.
 
 In the following, we will utilize the duality argument to prove the lemma's second inequality: Suppose
 \begin{equation}
  - \Delta \psi = \liftingOp_\level (\lambda - \tilde \lambda_\level) \text{ in } \Omega \quad \text{and} \quad \psi = 0 \text{ on } \partial \Omega
 \end{equation}
 and that $\tilde \rho_\level$ is the EDG approximation of $\psi$ on the skeleton, which means
 \begin{equation}
  a_\level (\tilde \rho_\level, \mu) = (\liftingOp_\level (\lambda - \tilde \lambda_\level), \localU_\level \mu) \qquad \forall \mu \in \skeletalSpace_\level.
 \end{equation}
 Moreover, let $\rho_\level$ be the solution of
 \begin{equation}\label{EQ:define_rho}
  a_\level (\rho_\level, \mu) = (\liftingOp_\level (\lambda - \tilde \lambda_\level), \liftingOp_\level \mu) \qquad \forall \mu \in \skeletalSpace_\level.
 \end{equation}
 Similar to the estimation of $\| \lambda - \tilde \lambda_\level \|_{a_\level}$, we have
 \begin{equation}\label{EQ:rho_difference}
  \| \rho_\level - \tilde \rho_\level \|_{a_\level} \lesssim h_\level \| \liftingOp_\level (\lambda - \tilde \lambda_\level) \|_0.
 \end{equation}
 Taking $\mu = \lambda - \tilde \lambda_\level$ in \eqref{EQ:define_rho}, we receive
 \begin{align}\label{EQ:lifting_difference}
  \| \liftingOp_\level (\lambda - \tilde \lambda_\level) \|^2_0 ~=~ & a_\level (\rho_\level, \lambda - \tilde \lambda_\level) \overset{\eqref{EQ:tildelambdadifference}}{\underset{\text{Lem.~\ref{LEM:lifting_identity}}}{~=~}} a_\level (\lambda - \tilde \lambda_\level, \rho_\level - \gamma_\level\contLinProj_\level \psi)\\
  ~=~ & a_\level (\rho_\level - \tilde \rho_\level , \lambda - \tilde \lambda_\level) + a_\level (\tilde \rho_\level - \gamma_\level \contLinProj_\level \psi, \lambda - \tilde \lambda_\level)\notag\\
  ~\le~ & \| \lambda - \tilde \lambda_\level \|_{a_\level} \left( \| \rho_\level - \tilde \rho_\level \|_{a_\level} + \| \tilde \rho_\level - \gamma_\level \contLinProj_\level \psi \|_{a_\level} \right).\notag
 \end{align}
 This can be further estimated noting that%\gkwarning{Check regularity in other paper}
 \begin{align}
  \| \tilde \rho_\level - \gamma_\level \contLinProj_\level \psi \|_{a_\level} \overset{\text{Lem.~\ref{LEM:eigenvalue_bound}}} \lesssim & h_\level^{-1} \| \tilde \rho_\level - \gamma_\level \contLinProj_\level \psi \|_\level \\
  \lesssim~ & h_\level^{-1} \left( \| \tilde \rho_\level - \skeletalProj_\level \psi \|_\level + \| \skeletalProj_\level \psi - \psi \|_\level + \| \psi - \contLinProj_\level \psi \|_\level \right)\notag\\
  \overset{\text{Lem.~\ref{LEM:EDG_conv}}}{\underset{\text{trace \& $L^2$ approx}}{~\lesssim~}} & h_\level^{-1} \left( h_\level^2 | \psi |_2 \right) \overset{\text{regularity}}{\lesssim} h_\level \| \liftingOp_\level (\lambda - \tilde \lambda_\level) \|_0.\notag
 \end{align}
 Using this inequality combined with \eqref{EQ:rho_difference} and \eqref{EQ:lifting_difference}, we have
 \begin{equation}
  \| \liftingOp_\level (\lambda - \tilde \lambda_\level) \|^2_0 \lesssim h_\level \| \liftingOp_\level (\lambda - \tilde \lambda_\level) \|_0 \| \lambda - \tilde \lambda_\level \|_{a_\level}.
 \end{equation}
 By the lemma's first inequality and \eqref{EQ:norm_equiv}
 \begin{equation}
  \| \lambda - \tilde \lambda_\level \|_\level \cong \| \liftingOp_\level (\lambda - \tilde \lambda_\level) \|_0 \lesssim h_\level \| \lambda - \tilde \lambda_\level \|_{a_\level} \lesssim h_\level^2 \| A_\level \lambda \|_\level.
 \end{equation}
\end{proof}
\begin{lemma}\label{LEM:injection_properties}
 Provided $\tau_\level h_\level \lesssim 1$, the injection operator $\injectionOp_\level \colon \skeletalSpace_{\level - 1} \to \skeletalSpace_\level$ satisfies
 \begin{align}
  \injectionOp_\level \gamma_{\level - 1} w ~=~ & \gamma_\level w && \text{if } w \in \linElementSpace_{\level - 1},\\
  \| \injectionOp_\level \mu \|_\level ~\lesssim~ & \| \mu \|_{\level - 1} && \forall \mu \in \skeletalSpace_{\level - 1}.
 \end{align}
\end{lemma}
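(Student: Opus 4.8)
The plan is to unfold the definition $\injectionOp_\level = \gamma_\level I_\level^c \extensionOp_{\level-1}$ and to exploit that $I_\level^c$ is merely the inclusion $\contElementSpace_{\level-1}\hookrightarrow\contElementSpace_\level$. Thus, for any $\mu\in\skeletalSpace_{\level-1}$, the function $u:=\extensionOp_{\level-1}\mu\in\contElementSpace_{\level-1}\subset\contElementSpace_\level$ is a single object whose trace on the coarse skeleton is $\mu$ (by construction of $\extensionOp_{\level-1}$, i.e. $\gamma_{\level-1}\extensionOp_{\level-1}=\mathrm{id}$) and whose trace on the fine skeleton is precisely $\injectionOp_\level\mu = \gamma_\level u$. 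Both asserted properties then become statements about the two traces of this one continuous, piecewise degree-$p$ function $u$.

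For the first (consistency) identity I would take $\mu=\gamma_{\level-1}w$ with $w\in\linElementSpace_{\level-1}$ and show $\extensionOp_{\level-1}\gamma_{\level-1}w=w$ as elements of $\contElementSpace_{\level-1}$. Checking this at the Lagrange support points $\vec x$ that define $\extensionOp_{\level-1}$: at a point on a face the value is $[\gamma_{\level-1}w](\vec x)=w(\vec x)$ by continuity of $w$, while at an interior point the value is $[\localU_{\level-1}\gamma_{\level-1}w](\vec x)=w(\vec x)$ by \cref{LEM:lifting_identity} applied on level $\level-1$ (reproduction of piecewise linears, $\localU_{\level-1}\gamma_{\level-1}w=w$). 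Since $\extensionOp_{\level-1}\gamma_{\level-1}w$ and $w$ are both degree-$p$ polynomials on each coarse cell agreeing at every Lagrange node, they coincide; applying the fine trace $\gamma_\level$ and using that $I_\level^c$ is the inclusion then yields $\injectionOp_\level\gamma_{\level-1}w=\gamma_\level w$.

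For the boundedness in the $L^2$-commensurate norm I would chain three facts. First, $\|\injectionOp_\level\mu\|_\level=\|\gamma_\level u\|_\level=\|u\|_\level\cong\|u\|_0$ by the norm equivalence of \cref{LEM:lem_35}, applied to $u$ viewed as a fine-mesh piecewise polynomial of degree $p$ (legitimate since each fine cell lies in one coarse cell). Second, $\|u\|_0\cong\|u\|_{\level-1}$ by the same equivalence on level $\level-1$. Third, $\|u\|_{\level-1}=\|\mu\|_{\level-1}$ exactly, because the $\|\cdot\|_{\level-1}$ norm only sees traces over $\partial T$ for coarse cells $T\in\mesh_{\level-1}$, and there $\gamma_{\level-1}u=\mu$. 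Concatenating the three yields $\|\injectionOp_\level\mu\|_\level\lesssim\|\mu\|_{\level-1}$.

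The main obstacle is the first property rather than the second: it hinges entirely on the reproduction property $\localU_{\level-1}\gamma_{\level-1}w=w$ of the local solver on piecewise linears (\cref{LEM:lifting_identity}) together with the observation that agreement of $\extensionOp_{\level-1}\gamma_{\level-1}w$ and $w$ at the degree-$p$ Lagrange nodes forces genuine equality of the two polynomials. This is exactly where the continuity of the EDG skeletal space in the vertices (and the nodal definition of $\extensionOp$) is needed, so that vertex nodes are handled by the face case and no node is treated inconsistently. The boundedness, by contrast, is a routine scaling argument once \cref{LEM:lem_35} is granted; the only point of care is that $u$ is piecewise polynomial of a fixed degree on \emph{both} meshes, so the equivalence may be invoked level-by-level, and that the bounded-refinement assumption keeps all constants independent of $\level$.
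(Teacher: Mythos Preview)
Your argument for the first identity is correct and matches the paper: both show $\extensionOp_{\level-1}\gamma_{\level-1}w=w$ by checking the Lagrange nodes, the interior case being covered by the reproduction property $\localU_{\level-1}\gamma_{\level-1}w=w$ of \cref{LEM:lifting_identity}.

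The boundedness argument, however, has a genuine gap. You invoke a ``norm equivalence'' $\|u\|_0\cong\|\gamma_{\level-1}u\|_{\level-1}$ for piecewise polynomials $u$ (and attribute it to \cref{LEM:lem_35}, which says something unrelated). The direction you actually need in your second step is $\|u\|_0\lesssim\|\gamma_{\level-1}u\|_{\level-1}$, and this fails for $p\ge d+1$: the bubble function $b_T=\prod_i\lambda_i\in\polynomials_{d+1}(T)$ vanishes on $\partial T$ yet is nonzero in the interior, so the trace map on $\polynomials_p(T)$ is not injective and no such inverse bound can hold. Since the lemma is claimed for all $p\ge1$, your chain $\|u\|_\level\lesssim\|u\|_0\lesssim\|u\|_{\level-1}=\|\mu\|_{\level-1}$ breaks at the middle step.

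The paper closes this gap by appealing to \cref{LEM:thm_31} ($\|\localU_{\level-1}\mu\|_0\lesssim\|\mu\|_{\level-1}$), not to a trace equivalence. The bound $\|\extensionOp_{\level-1}\mu\|_0\lesssim\|\mu\|_{\level-1}$ follows because, by scaling, $\|\extensionOp_{\level-1}\mu\|_0$ is controlled by its Lagrange nodal values: the face values are those of $\mu$ and contribute $\|\mu\|_{\level-1}$, while the interior values are those of $\localU_{\level-1}\mu$ and are controlled via \cref{LEM:thm_31}. In short, you cannot avoid using information about the interior degrees of freedom of $\extensionOp_{\level-1}\mu$; the skeletal trace alone is insufficient once $p$ is large enough.
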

\begin{proof}
 One easily verifies that for all $w \in \linElementSpace_{\level - 1}$
 \begin{equation}
  \extensionOp_{\level - 1} \gamma_{\level - 1} w = w.
 \end{equation}
 This implies the equality. By the standard scaling argument and Lemma \ref{LEM:thm_31} we can also get the inequality via
 \begin{equation}
  \| \injectionOp_\level \mu \|_\level = \| \gamma_\level \extensionOp_{\level-1} \mu \|_\level \lesssim \| \extensionOp_{\level - 1} \mu \|_0 \lesssim \| \mu \|_{\level - 1}.
 \end{equation}
\end{proof}

\begin{lemma}\label{LEM:square_approx}
 Suppose that $w \in H^2(\Omega)$ is the solution of
 \begin{equation}
  - \Delta w = g \text{ in } \Omega, \qquad w = 0 \text{ on } \partial \Omega,
 \end{equation}
 where $g \in L^2(\Omega)$, and $w_\level$, $w_{\level -1}$ are its EDG approximates with respect to $\mesh_\level$ and $\mesh_{\level -1}$, respectively. If $\tau_\level = \tfrac{c}{h_\level}$, we obtain
 \begin{equation}
  \| w_\level - \injectionOp_\level w_{\level -1} \|_\level \lesssim h^2_\level \| g \|_0.
 \end{equation}
\end{lemma}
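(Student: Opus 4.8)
The plan is to reduce everything to the single-level EDG convergence estimate of Lemma~\ref{LEM:EDG_conv}, applied on \emph{both} levels, together with the two properties of the injection operator collected in Lemma~\ref{LEM:injection_properties}: its $\|\cdot\|$-boundedness and, crucially, the identity $\injectionOp_\level \gamma_{\level-1} v = \gamma_\level v$ for $v \in \linElementSpace_{\level-1}$. The guiding idea is that both $w_\level$ and $\injectionOp_\level w_{\level-1}$ are second-order accurate approximations of the common trace $\gamma_\level w$, so their difference should be $O(h^2_\level)$; the linear-element identity is what lets a coarse-grid object be compared against a fine-grid one without losing an order.

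Concretely, I would insert the coarse linear $L^2$-projection $\contLinProj_{\level-1} w \in \linElementSpace_{\level-1}$ and use the telescoping splitting
\[
 w_\level - \injectionOp_\level w_{\level-1} = (w_\level - \skeletalProj_\level w) + (\skeletalProj_\level w - \gamma_\level \contLinProj_{\level-1} w) + \injectionOp_\level(\gamma_{\level-1}\contLinProj_{\level-1} w - \skeletalProj_{\level-1} w) + \injectionOp_\level(\skeletalProj_{\level-1} w - w_{\level-1}),
\]
where $\gamma_\level \contLinProj_{\level-1} w = \injectionOp_\level \gamma_{\level-1} \contLinProj_{\level-1} w$ by Lemma~\ref{LEM:injection_properties}. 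The first and last terms are bounded by $h^2_\level |w|_2$ directly by Lemma~\ref{LEM:EDG_conv} on levels $\level$ and $\level-1$ (the last one after applying the boundedness of $\injectionOp_\level$ to pass to $\|\cdot\|_{\level-1}$), using $h_{\level-1} \cong h_\level$. The third term is handled analogously: injection boundedness sends it to $\|\gamma_{\level-1}\contLinProj_{\level-1} w - \skeletalProj_{\level-1} w\|_{\level-1} \le \|w - \contLinProj_{\level-1} w\|_{\level-1} + \|w - \skeletalProj_{\level-1} w\|_{\level-1} \lesssim h^2_{\level-1}|w|_2$ by two applications of the same-level trace-approximation estimate.

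The remaining, and genuinely delicate, contribution is the second term $\skeletalProj_\level w - \gamma_\level \contLinProj_{\level-1} w$. Splitting off $\gamma_\level w$ gives $\|\skeletalProj_\level w - \gamma_\level w\|_\level + \|\gamma_\level(w - \contLinProj_{\level-1} w)\|_\level$; the first piece is $\lesssim h^2_\level|w|_2$ by the level-$\level$ trace approximation, but the second is a \emph{cross-level} quantity, measuring a coarse-grid linear projection in the fine-grid trace norm, for which the paper's same-level trace-approximation estimate does not directly apply. This is the main obstacle. I expect to resolve it by a scaling argument: the scaled trace inequality on each cell of $\mesh_\level$ yields $\|v\|^2_\level \lesssim \|v\|^2_0 + h^2_\level |v|^2_{1,\mesh_\level}$, and applying this to $v = w - \contLinProj_{\level-1} w$ with the standard second-order $L^2$ and first-order $H^1$ approximation properties of the $L^2$-projection onto continuous piecewise linears, namely $\|w - \contLinProj_{\level-1} w\|_0 \lesssim h^2_{\level-1}|w|_2$ and $|w - \contLinProj_{\level-1} w|_1 \lesssim h_{\level-1}|w|_2$, gives $\|w - \contLinProj_{\level-1} w\|_\level \lesssim h^2_\level |w|_2$. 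Collecting the four terms and invoking elliptic regularity $|w|_2 \lesssim \|g\|_0$ completes the estimate.
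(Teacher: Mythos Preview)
Your proof is correct and follows essentially the same approach as the paper: both insert an auxiliary coarse piecewise-linear function, invoke the injection identity $\injectionOp_\level \gamma_{\level-1} v = \gamma_\level v$ to bridge the two levels, and then apply Lemma~\ref{LEM:EDG_conv} on each level plus a cross-level trace estimate. The only difference is the choice of auxiliary---the paper uses the linear CG solution $\hat w_{\level-1}\in\linElementSpace_{\level-1}$ (so that second-order $L^2$-accuracy comes directly from standard CG error estimates), whereas you use $\contLinProj_{\level-1} w$; your scaled-trace treatment of the cross-level term $\|w-\contLinProj_{\level-1}w\|_\level$ is slightly more explicit than the paper's terse hint, but the argument is the same in substance.
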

\begin{proof}
 Defining $\hat w_{\level -1} \in \linElementSpace_{\level - 1}$ as the CG solution of $w$ and $\gamma_{\level - 1} \hat w_{\level - 1}$ as its trace on $\skeletal_{\level -1}$, we obtain
 \begin{gather}
  \| w_\level - \injectionOp_\level w_{\level -1} \|_\level \\
  \le \| w_\level - \skeletalProj_\level w \|_\level + \| \skeletalProj_\level w - \injectionOp_\level \gamma_{\level - 1} \hat w_{\level -1} \|_\level + \| \injectionOp_\level \gamma_{\level - 1} \hat w_{\level -1} - \injectionOp_\level w_{\level -1} \|_\level. \notag
 \end{gather}
 Then, by the approximation properties of EDG (cf. Lemma \ref{LEM:EDG_conv}), CG, inverse inequality, $\| v \|_\level \lesssim \| v \|_0$ for $v \in \contElementSpace_\level$, and Lemma~\ref{LEM:injection_properties}, we obtain
 \begin{align}
  \| w_\level - \skeletalProj_\level w \|_\level &~\lesssim~ h^2_\level \| g \|_0,\\
  \| \skeletalProj_\level w - \injectionOp_\level \gamma_{\level - 1} \hat w_{\level -1} \|_\level & ~=~ \| \skeletalProj_\level w - \gamma_\level \hat w_{\level -1} \|_\level \lesssim h^2_\level \| g \|_0,\\
  \| \injectionOp_\level \gamma_{\level - 1} \hat w_{\level -1} - \injectionOp_\level w_{\level -1} \|_\level & ~\lesssim~\| \gamma_{\level - 1} \hat w_{\level -1} - w_{\level -1} \|_{\level - 1} \\
  &~\le~\| \gamma_{\level - 1} \hat w_{\level -1} - \skeletalProj_{\level - 1} w \|_\level + \| \skeletalProj_{\level - 1} w - w_{\level -1} \|_{\level - 1} \notag\\
  & ~\lesssim~ h^2_\level \| g \|_0.\notag
 \end{align}
 This gives the result.
\end{proof}
\begin{lemma}
 \eqref{EQ:precond14} holds if $\tau_\level \cong h_\level^{-1}$.
\end{lemma}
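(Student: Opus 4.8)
The plan is to compare $\injectionOp_\level\projectionOp_{\level-1}\lambda$ with the genuine two-grid EDG solution of the auxiliary Poisson problem attached to $\lambda$. Let $\tilde\lambda_\level$ be as in \eqref{EQ:operator_Poisson}, and introduce the coarse EDG approximation $\tilde\lambda_{\level-1}\in\skeletalSpace_{\level-1}$ of the \emph{same} load $f_\lambda$, i.e.\ $a_{\level-1}(\tilde\lambda_{\level-1},\mu)=(f_\lambda,\localU_{\level-1}\mu)_0$ for all $\mu\in\skeletalSpace_{\level-1}$. I would then split
\begin{equation*}
 \|\lambda-\injectionOp_\level\projectionOp_{\level-1}\lambda\|_\level
 \le \|\lambda-\tilde\lambda_\level\|_\level
 + \|\tilde\lambda_\level-\injectionOp_\level\tilde\lambda_{\level-1}\|_\level
 + \|\injectionOp_\level(\tilde\lambda_{\level-1}-\projectionOp_{\level-1}\lambda)\|_\level .
\end{equation*}
The first term is $\lesssim h_\level^2\|A_\level\lambda\|_\level$ by the second estimate of Lemma \ref{LEM:tilde_lambda}. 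The second term is precisely the two-grid EDG error for the Poisson problem with right-hand side $f_\lambda$, so Lemma \ref{LEM:square_approx} and \eqref{EQ:f_lambda_bound} give $\lesssim h_\level^2\|f_\lambda\|_0\lesssim h_\level^2\|A_\level\lambda\|_\level$. By the stability estimate of Lemma \ref{LEM:injection_properties} the third term is $\lesssim\|\tilde\lambda_{\level-1}-\projectionOp_{\level-1}\lambda\|_{\level-1}$, so everything reduces to showing $\|\projectionOp_{\level-1}\lambda-\tilde\lambda_{\level-1}\|_{\level-1}\lesssim h_\level^2\|A_\level\lambda\|_\level$.

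Set $\hat e:=\projectionOp_{\level-1}\lambda-\tilde\lambda_{\level-1}$. Subtracting the defining relation \eqref{EQ:projection_definition} of $\projectionOp_{\level-1}$ from that of $\tilde\lambda_{\level-1}$, and writing $a_\level(\lambda,\injectionOp_\level\mu)=(f_\lambda,\localU_\level\injectionOp_\level\mu)_0+a_\level(\lambda-\tilde\lambda_\level,\injectionOp_\level\mu)$ via \eqref{EQ:operator_Poisson}, I obtain the consistency identity
\begin{equation*}
 a_{\level-1}(\hat e,\mu)=(f_\lambda,\localU_\level\injectionOp_\level\mu-\localU_{\level-1}\mu)_0+a_\level(\lambda-\tilde\lambda_\level,\injectionOp_\level\mu)\qquad\forall\mu\in\skeletalSpace_{\level-1}.
\end{equation*}
The key structural point is that the right-hand side vanishes for $\mu=\gamma_{\level-1}w$ with $w\in\linElementSpace_{\level-1}$: since $\injectionOp_\level\gamma_{\level-1}w=\gamma_\level w$ and $\localU_\level\gamma_\level w=\localU_{\level-1}\gamma_{\level-1}w=w$ by Lemmas \ref{LEM:injection_properties} and \ref{LEM:lifting_identity}, the first summand is $(f_\lambda,w-w)_0=0$, while the second equals $(f_\lambda,\liftingOp_\level\gamma_\level w-\localU_\level\gamma_\level w)_0=0$ by \eqref{EQ:tildelambdadifference} and Lemma \ref{LEM:lifting_identity}. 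This is the analogue of Galerkin orthogonality that will buy the full power $h_\level^2$.

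The remaining work is a duality (Aubin--Nitsche) argument mirroring the second half of the proof of Lemma \ref{LEM:tilde_lambda}. Using $\|\hat e\|_{\level-1}\cong\|\liftingOp_{\level-1}\hat e\|_0$ from \eqref{EQ:norm_equiv}, I would take the dual solution $\psi$ of $-\Delta\psi=\liftingOp_{\level-1}\hat e$ with its coarse companion $\rho_{\level-1}$, represent $\|\liftingOp_{\level-1}\hat e\|_0^2$ through the consistency identity tested with $\rho_{\level-1}$, and invoke the orthogonality above to replace $\rho_{\level-1}$ by $\zeta:=\rho_{\level-1}-\gamma_{\level-1}\contLinProj_{\level-1}\psi$, for which $\|\zeta\|_{a_{\level-1}}\lesssim h_\level\|\liftingOp_{\level-1}\hat e\|_0$. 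The term $a_\level(\lambda-\tilde\lambda_\level,\injectionOp_\level\zeta)$ is then controlled by $\|\lambda-\tilde\lambda_\level\|_{a_\level}\lesssim h_\level\|A_\level\lambda\|_\level$ (Lemma \ref{LEM:tilde_lambda}) and the boundedness of $\injectionOp_\level$ (Lemma \ref{LEM:injection_bounded}). The genuinely delicate term is $(f_\lambda,\localU_\level\injectionOp_\level\zeta-\localU_{\level-1}\zeta)_0$, and the principal obstacle of the whole proof is the cross-level comparison of the local solvers,
\begin{equation*}
 \|\localU_\level\injectionOp_\level\mu-\localU_{\level-1}\mu\|_0\lesssim h_\level\|\mu\|_{a_{\level-1}}\qquad\forall\mu\in\skeletalSpace_{\level-1}.
\end{equation*}
I would prove this by inserting the shared continuous extension $\extensionOp_{\level-1}\mu$: the piece $\|\extensionOp_{\level-1}\mu-\localU_{\level-1}\mu\|_0$ is first order by the argument already used for \eqref{EQ:liftung_u_diff} (Lemma \ref{LEM:lem_35}), while $\|\localU_\level\gamma_\level\extensionOp_{\level-1}\mu-\extensionOp_{\level-1}\mu\|_0$ is a local recovery estimate, the cellwise solver applied to the trace of the fine continuous function $\extensionOp_{\level-1}\mu$ reproducing it up to a defect driven by $\Delta\extensionOp_{\level-1}\mu$, which by scaling and the inverse inequality is $\lesssim h_\level|\extensionOp_{\level-1}\mu|_1\lesssim h_\level\|\mu\|_{a_{\level-1}}$ through \eqref{EQ:extension_bound}. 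Feeding $\|\zeta\|_{a_{\level-1}}\lesssim h_\level\|\liftingOp_{\level-1}\hat e\|_0$ into this bound turns the delicate term into $\lesssim h_\level^2\|A_\level\lambda\|_\level\|\liftingOp_{\level-1}\hat e\|_0$, and dividing through yields $\|\hat e\|_{\level-1}\lesssim h_\level^2\|A_\level\lambda\|_\level$ (using $h_{\level-1}\cong h_\level$ throughout). I expect the cross-level local-solver comparison to be the hard part, since the two solvers live on different meshes and communicate only through the common continuous extension.
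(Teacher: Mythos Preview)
Your proposal is correct and follows essentially the same skeleton as the paper: the same triangle-inequality split, the same auxiliary coarse EDG solution $\tilde\lambda_{\level-1}$, the same Galerkin-orthogonality-to-linears observation, and a duality argument for $\hat e=\projectionOp_{\level-1}\lambda-\tilde\lambda_{\level-1}$.

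Two streamlinings in the paper are worth noting. First, your two-term consistency identity can be collapsed: by \eqref{EQ:SCP_def} one has directly $a_\level(\lambda,\injectionOp_\level\mu)=(f_\lambda,\liftingOp_\level\injectionOp_\level\mu)_0$, so the paper works with the single-term identity
\[
 a_{\level-1}(\tilde\lambda_{\level-1}-\projectionOp_{\level-1}\lambda,\mu)=(f_\lambda,\localU_{\level-1}\mu-\liftingOp_\level\injectionOp_\level\mu)_0,
\]
and then bounds $\|\liftingOp_\level\injectionOp_\level\mu-\localU_{\level-1}\mu\|_0$ by inserting $\localU_\level\injectionOp_\level\mu$ and invoking \eqref{EQ:liftung_u_diff} plus the cross-level estimate. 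Your two terms are equivalent to this single term via \eqref{EQ:tildelambdadifference}, so the detour through $a_\level(\lambda-\tilde\lambda_\level,\injectionOp_\level\mu)$ is unnecessary. Second, the estimate you flag as the ``hard part'', $\|\localU_\level\injectionOp_\level\mu-\localU_{\level-1}\mu\|_0\lesssim h_\level\|\mu\|_{a_{\level-1}}$, is exactly Lemma~\ref{LEM:lem_43} (imported from the companion HDG paper), so you do not need to reprove it; your sketch via $\extensionOp_{\level-1}\mu$ is plausible but less clean than simply citing that lemma. With these two simplifications, the paper runs the duality as an abstract Aubin--Nitsche step $\|e_{\level-1}\|_{\level-1}\lesssim h_{\level-1}\|e_{\level-1}\|_{a_{\level-1}}$ followed by a direct energy bound, rather than spelling out the dual test function $\zeta$ as you do; both routes are equivalent.
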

\begin{proof}
 Let $\tilde \lambda_{\level - 1} \in \skeletalSpace_{\level - 1}$ be the solution of
 \begin{equation}
  a_{\level - 1} (\tilde \lambda_{\level - 1}, \mu) = (f_\lambda, \localU_{\level - 1} \mu) \qquad \forall \mu \in \skeletalSpace_{\level - 1},
 \end{equation}
 i.e., $\tilde \lambda_{\level - 1}$ is the EDG solution of \eqref{EQ:Poisson} on $\skeletalSpace_{\level - 1}$. By Lemma \ref{LEM:square_approx} and \eqref{EQ:f_lambda_bound}, we have
 \begin{equation}\label{EQ:tilde_injection}
  \| \tilde \lambda_\level - \injectionOp_\level \tilde \lambda_{\level - 1} \|_\level \lesssim h_\level^2 \| f_\lambda \|_0 \lesssim h_\level^2 \| A_\level \lambda \|_\level.
 \end{equation}
 Denoting $e_{\level - 1} = \tilde \lambda_{\level - 1} - \projectionOp_{\level - 1} \lambda$, we can conclude via 
 \begin{equation}\label{EQ:weak_error}
  a_{\level - 1} (\projectionOp_{\level - 1} \lambda, \mu) = a_\level (\lambda, \injectionOp_\level \mu) = (f_\lambda, \liftingOp_\level \injectionOp_\level \mu ) \qquad \forall \mu \in \skeletalSpace_{\level - 1}
 \end{equation}
 that
 \begin{equation}\label{EQ:error_f_lambda}
  a_{\level - 1}(e_{\level - 1}, \mu) = (f_\lambda , \localU_{\level - 1} \mu - \liftingOp_\level \injectionOp_\level \mu)_0.
 \end{equation}
 Noting that for $\mu = \gamma_{\level - 1} w$ with $w \in \linElementSpace_{\level - 1}$ by Lemmas \ref{LEM:lifting_identity} \& \ref{LEM:injection_properties}
 \begin{equation}
  \localU_{\level - 1} \mu = w = \liftingOp_\level \injectionOp_\level \mu,
 \end{equation}
 which means $a_{\level - 1} (e_{\level - 1} , \mu ) = 0$. Similar to the estimation in Lemma \ref{LEM:tilde_lambda} we can use the duality argument to receive
 \begin{equation}\label{EQ:norm_bound}
  \| e_{\level - 1} \|_{\level - 1} \lesssim h_{\level - 1} \| e_{\level - 1} \|_{a_{\level - 1}}.
 \end{equation}
 Thus, we can deduce that for all $\mu \in \skeletalSpace_{\level - 1}$
 \begin{align}\label{EQ:error_approx}
  \| \liftingOp_\level \injectionOp_\level \mu - \localU_{\level - 1} \mu \|_0 ~\le~ & \| \liftingOp_\level \injectionOp_\level \mu - \localU_\level \injectionOp_\level \mu \|_0 + \| \localU_\level \injectionOp_\level \mu - \localU_\level \mu \|_0 \\
  \lesssim~& h_\level \| \injectionOp_\level \mu \|_{a_\level} + h_\level \| \mu \|_{a_{\level - 1}} \lesssim h_\level \| \mu \|_{a_{\level - 1}}.\notag
 \end{align}
 Here, the second inequality is obtained using \eqref{EQ:liftung_u_diff} and Lemma \ref{LEM:lem_43} and the last inequality is Lemma~\ref{LEM:injection_bounded}.
 
 Taking $\mu = e_{\level - 1}$ in \eqref{EQ:error_f_lambda} and using \eqref{EQ:error_approx}, we have
 \begin{equation}
  \| e_{\level - 1} \|^2_{a_{\level - 1}} \lesssim \| f_\lambda \|_0 \; h_\level \| e_{\level - 1} \|_{a_{\level - 1}},
 \end{equation}
 that is
 \begin{equation}
  \| e_{\level - 1} \|_{a_{\level - 1}} \lesssim h_\level \| f_\lambda \|_0 \lesssim h_\level \| A_\level \lambda \|_\level.
 \end{equation}
 Using \eqref{EQ:norm_bound}, this results in
 \begin{equation}\label{EQ:error_square}
  \| e_{\level - 1} \|_{\level - 1} \lesssim h_\level^2 \|A_\level \lambda \|_{\level}
 \end{equation}
 and by triangle inequality (first inequality), Lemma \ref{LEM:tilde_lambda} \& \ref{LEM:injection_properties}, \eqref{EQ:tilde_injection} (second inequality), and \eqref{EQ:error_square} (last inequality)
 \begin{align}
  \| \lambda - \injectionOp_\level \projectionOp_{\level - 1} \lambda \|_\level ~\le~& \| \lambda - \tilde \lambda_\level \|_\level + \| \tilde \lambda_\level - \injectionOp_\level \tilde \lambda_{\level - 1} \|_\level + \| \injectionOp_\level \tilde \lambda_{\level - 1} - \injectionOp_\level \projectionOp_{\level - 1} \lambda \|_\level \notag\\
  \lesssim~ & h_\level^2 \| A_\level \lambda \|_\level + \| e_{\level - 1} \|_{\level - 1} \lesssim h_\level^2 \| A_\level \lambda \|_\level.
 \end{align}
\end{proof}
% 
% ---------------------------------------------------------------------
\section{Proof of \eqref{EQ:precond2} and \eqref{EQ:precond3}}\label{SEC:proof_a2_a3}
% ---------------------------------------------------------------------
% 
The proof of \eqref{EQ:precond2} is a simple consequence of Lemma \ref{LEM:injection_bounded} with $\projectionOp_{\level-1} \lambda$ instead of $\lambda$ and the following lemma which can be obtained similar to \cite[Lem.~4.2]{LuRK2020}:
\begin{lemma}\label{LEM:projection_stable}
 The ``Ritz quasi-projection'' $\projectionOp_{\level-1}: \skeletalSpace_\level \to  \skeletalSpace_{\level-1}$ is stable in the sense that for all $\lambda \in \skeletalSpace_\level$, we have
 \begin{equation}
  \| \projectionOp_{\level-1}  \lambda \|_{a_{\level-1}} \lesssim \| \lambda \|_{a_\level}.
 \end{equation}
\end{lemma}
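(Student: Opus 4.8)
The plan is to test the defining relation of $\projectionOp_{\level-1}$ against $\projectionOp_{\level-1}\lambda$ itself, and then to combine the Cauchy--Schwarz inequality in the $a_\level$-scalar product with the boundedness of the injection operator. Since $\projectionOp_{\level-1}$ is defined only weakly through \eqref{EQ:projection_definition}, the natural way to access its $a_{\level-1}$-norm is to choose the test function equal to $\projectionOp_{\level-1}\lambda$, turning the left-hand side into a squared norm.

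Concretely, I would first set $\mu = \projectionOp_{\level-1}\lambda \in \skeletalSpace_{\level-1}$ in \eqref{EQ:projection_definition}, which gives
\begin{equation*}
 \| \projectionOp_{\level-1}\lambda \|^2_{a_{\level-1}}
 = a_{\level-1}(\projectionOp_{\level-1}\lambda, \projectionOp_{\level-1}\lambda)
 = a_\level(\lambda, \injectionOp_\level \projectionOp_{\level-1}\lambda).
\end{equation*}
Because $a_\level(\cdot,\cdot)$ is a scalar product, the Cauchy--Schwarz inequality yields $a_\level(\lambda, \injectionOp_\level \projectionOp_{\level-1}\lambda) \le \|\lambda\|_{a_\level}\,\|\injectionOp_\level \projectionOp_{\level-1}\lambda\|_{a_\level}$. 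Next I would invoke Lemma~\ref{LEM:injection_bounded}, applied with $\projectionOp_{\level-1}\lambda$ in the role of $\lambda$, to obtain $\|\injectionOp_\level \projectionOp_{\level-1}\lambda\|_{a_\level} \lesssim \|\projectionOp_{\level-1}\lambda\|_{a_{\level-1}}$. Combining the three estimates gives
\begin{equation*}
 \| \projectionOp_{\level-1}\lambda \|^2_{a_{\level-1}}
 \lesssim \|\lambda\|_{a_\level}\,\|\projectionOp_{\level-1}\lambda\|_{a_{\level-1}},
\end{equation*}
and dividing through by $\|\projectionOp_{\level-1}\lambda\|_{a_{\level-1}}$ (the estimate being trivial when this quantity vanishes) produces the asserted bound.

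I do not expect a genuine obstacle here: the entire analytical substance is packaged inside Lemma~\ref{LEM:injection_bounded}, which is where the assumption $\tau_\level = c/h_\level$ enters and where the geometry of the non-nested meshes is handled. The present statement is then a purely formal, duality-type consequence of that boundedness together with the self-adjoint positive-definite structure of $a_\level$. This is precisely why the lemma can be established by the same argument as in \cite[Lem.~4.2]{LuRK2020}, with the only change being that $\skeletalSpace_\level$ is now the EDG skeletal space rather than its HDG counterpart — a change that is already absorbed once Lemma~\ref{LEM:injection_bounded} is available in the EDG setting.
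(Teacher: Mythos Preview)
Your argument is correct and is exactly the standard duality argument that the paper has in mind when it says the lemma ``can be obtained similar to \cite[Lem.~4.2]{LuRK2020}'': test \eqref{EQ:projection_definition} with $\mu=\projectionOp_{\level-1}\lambda$, apply Cauchy--Schwarz for $a_\level$, and absorb $\|\injectionOp_\level\projectionOp_{\level-1}\lambda\|_{a_\level}$ via Lemma~\ref{LEM:injection_bounded}. There is nothing to add.
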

Thus, we can deduce that
\begin{align}
 a_\level ( \lambda - & \injectionOp_{\level-1} \projectionOp_\level \lambda, \lambda - \injectionOp_\level \projectionOp_{\level-1} \lambda ) \\
 = & a_\level ( \lambda, \lambda ) - 2 a_\level ( \lambda, \injectionOp_\level \projectionOp_{\level-1} \lambda ) + a_\level ( \injectionOp_\level \projectionOp_{\level-1} \lambda, \injectionOp_\level \projectionOp_{\level-1} \lambda ) \notag\\
 \le & a_\level ( \lambda, \lambda ) \underbrace{ - 2 a_{\level - 1} ( \projectionOp_{\level-1} \lambda,  \projectionOp_{\level-1} \lambda ) }_{ \le 0 } + C \underbrace{ a_{\level-1} ( \projectionOp_{\level-1} \lambda, \projectionOp_{\level-1} \lambda ) }_{ \lesssim \| \lambda \|^2_{a_\level} \; \text{by Lemma \ref{LEM:projection_stable}} },\notag
\end{align}

For the proof of \eqref{EQ:precond3}, we heavily rely on \cite{BrambleP1992} (where \eqref{EQ:precond3} is denoted (2.11)). Theorems 3.1 and 3.2 of \cite{BrambleP1992} ensure that \eqref{EQ:precond3} holds if the subspaces satisfy a ``limited interaction property'' which holds, because each degree of freedom (DoF) only ``communicates'' with other DoFs which are located on the same face as the DoF or on the other faces of the two adjacent elements.
% 
% ---------------------------------------------------------------------
\section{Numerical experiments}\label{SEC:numerics}
% ---------------------------------------------------------------------
% 
\begin{figure}[t!]
 \begin{tikzpicture}[scale = 2.]
  \draw (0,2) -- (0,0) -- (1,0) -- (1,1) -- (0,1) -- (1,0) -- (2,0) -- (2,2) -- (1,2) -- (1,1) -- (2,1) -- (1,2) -- (0,2) -- (2,0);
 \end{tikzpicture}
 \caption{Coarse grid (level 0) for numerical experiments. Meshes on higher levels are generated by uniform refinement.}\label{FIG:mesh}
\end{figure}
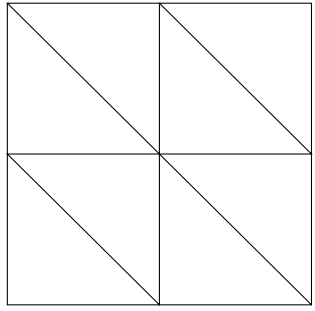
To evaluate the multigrid method for EDG numerically, we consider the Poisson problem
\begin{subequations}\label{EQ:num_testcase}
\begin{align}
 -\Delta u & = f && \text{ in } \Omega,\\
 u & = 0 && \text{ on } \partial \Omega,
\end{align}
\end{subequations}
% 
% where $f$ is chosen such that the exact solution $u$ to be approximated reads $u = \sin(2 \pi x) \sin(2 \pi y)$. The first mesh is shown in Figure \ref{FIG:mesh} and it is successively refined in our experiments. The implementation is based on the FFW toolbox from \cite{BGGRW07}. It uses a Lagrange basis and the Euclidean inner product in the coefficient space instead of the inner product $\langle .,. \rangle_\level$. These two inner products are equivalent up to a factor of $h^2_\level$.
% Supposing that the matrix form of \eqref{EQ:num_testcase} is $A \vec x = \vec b$, we stop the iteration for solving the linear system of equations if
% % 
% \begin{equation}
%  \frac{ \| \vec b - A \vec x_\text{iter} \|_2 }{ \| \vec b \|_2 } < 10^{-6}.
% \end{equation}
% % 
% The results of this procedure are illustrated in Table \ref{TAB:multigrid_steps}. Here, we see that the choice of $\tau \in \{ \tfrac{1}{h}, 1%, h 
% \}$ does not significantly influence the number of iterations, which is lower than for HDG.
where $f$ is chosen as one on the unit square $\Omega = (0,1)^2$. The implementation is based on the FFW toolbox from \cite{BGGRW07} and employs the Gauss--Seidel smoother. It uses a Lagrange basis and the Euclidean inner product in the coefficient space instead of the inner product $\langle .,. \rangle_\level$. These two inner products are equivalent up to a factor of $h^2_\level$. The numerical experiments are conducted on a successively refined mesh sequence of which the initial mesh is depicted in Figure \ref{FIG:mesh}. The iteration process for approximating $\vec x$ in $\mathbf A \vec x = \vec b$ representing the discrete version of \eqref{EQ:num_testcase} is stopped if
\begin{equation}
 \frac{ \| \vec b - \mathbf A \vec x_\text{iter} \|_2 }{ \| \vec b \|_2 } < 10^{-6},
\end{equation}
and the initial value $\vec x$ on mesh level $\ell$ is the solution on level $\ell-1$, describing a nested iteration.
The numbers of iteration steps are shown in
Table~\ref{TAB:multigrid_steps_f1} % for one and two pre- and post-smoothing steps on each level, respectively. Clearly, these numbers
and appear to be independent of the mesh level, as predicted by our
analysis. Additionally, the numbers are fairly small, such that we can
conclude that we actually have an efficient method. % If we employ two smoothing steps instead of one, the number of steps is almost dividedby two, such that both options will result in similar numericaleffort.
Finally, we see that the choice of
$\tau \in \{ \tfrac{1}{h}, 1%, h
\}$ does not significantly influence the number of iterations. We did
experiments for polynomial degrees up to three, and observed that the
iteration counts remain well bounded; nevertheless, we expect rising
counts for higher degrees, as we use a point smoother.
\begin{table}
 \begin{tabular}{cc|cccccc|cccccc}
  \toprule
  \multicolumn{2}{c|}{smoother}    & \multicolumn{6}{c|}{one step} & \multicolumn{6}{c}{two steps} \\
  \midrule
  \multicolumn{2}{c|}{mesh level}  & 1 & 2 & 3 & 4 & 5 & 6 & 1 & 2 & 3 & 4 & 5 & 6 \\
  \midrule
  \multirow{2}{*}{\rotatebox[origin=c]{90}{$p = 1$}}
  & $\tau = \tfrac{1}{h}$ & 6 & 7 & 7 & 6 & 6 & 6 & 4 & 5 & 5 & 5 & 4 & 4 \\
  & $\tau = 1$            & 6 & 7 & 7 & 6 & 6 & 6 & 4 & 5 & 5 & 5 & 4 & 4 \\
  \midrule
  \multirow{2}{*}{\rotatebox[origin=c]{90}{$p = 2$}}
  & $\tau = \tfrac{1}{h}$ & 7 & 7 & 7 & 7 & 7 & 7 & 5 & 4 & 4 & 4 & 4 & 4 \\
  & $\tau = 1$            & 7 & 7 & 7 & 7 & 7 & 7 & 5 & 4 & 4 & 4 & 4 & 4 \\
  \midrule
  \multirow{2}{*}{\rotatebox[origin=c]{90}{$p = 3$}}
  & $\tau = \tfrac{1}{h}$ & 9 & 9 & 9 & 9 & 9 & 9 & 6 & 6 & 6 & 6 & 5 & 5 \\
  & $\tau = 1$            & 9 & 9 & 9 & 9 & 9 & 9 & 6 & 6 & 6 & 6 & 5 & 5 \\
  \bottomrule
 \end{tabular}\vspace{1ex}
 \caption{Numbers of iterations with one and two smoothing steps for $f \equiv 1$. The polynomial degree of the EDG method is $p$.}\label{TAB:multigrid_steps_f1}
\end{table}

% Beyond that, the choice $\tau = 0$ is invalid, since it causes the local problems not to allow for a unique reconstruction $(\localU_\level \lambda, \localQ_\level \lambda)$. Accordingly, we see that the estimated orders of convergence (EOC) of the primary unknown $u$ and the secondary unknown $\vec q$ of the HDG method does not significantly depend on the choice of $\tau$ as illuminated in Table \ref{TAB:hdg_eoc}. Here, the EOC for $u$ is evaluated according to
% % 
% \begin{equation}
%  \text{EOC} = \log \left( \frac{ \| u - u_{\level-1} \|_{L^2(\Omega)} }{ \| u - u_\level \|_{L^2(\Omega)} } \right) / \log (2)
% \end{equation}
% % 
% and the one for $\vec q$ is evaluated analogously.
% % 
Additionally, we tested the correctness of our implementation by
employing a right hand side leading to the solution
$u = \sin(2 \pi x) \sin(2 \pi y)$. The estimated orders of convergence
(EOC) of the primary unknown $u$ computed as
\begin{equation}
 \text{EOC} = \log \left( \frac{ \| u - u_{\level-1} \|_{L^2(\Omega)} }{ \| u - u_\level \|_{L^2(\Omega)} } \right) / \log (2),
\end{equation}
and the secondary unknown $\vec q$ of the HDG method are reported in
Table~\ref{TAB:hdg_eoc}. %; they coincide well with the orders predictedin~\cite{CockburnGS2010}.
Iteration counts are almost
identical to those in Table~\ref{TAB:multigrid_steps_f1}, such that we
do not report them here.  As opposed to the HDG method, we see that the choice $\tau = \tfrac{1}{h}$
is not suboptimal as compared to $\tau = 1$. %, as the error in the secondaryunknown $\vec q$ converges slower by one order. This is why we included results for $\tau=1$ in Table~\ref{TAB:multigrid_steps_f1} albeit a theoretical justification is still missing.
\begin{table}
 \begin{tabular}{cc|@{\,}lcc@{\,}lcc@{\,}lcc@{\,}lcc@{\,}lcc@{\,}lcc}
  \toprule
  \multicolumn{2}{c|@{\,}}{mesh}                  && \multicolumn{2}{c}{2}  && \multicolumn{2}{c}{3}   && \multicolumn{2}{c}{4}   && \multicolumn{2}{c}{5}    && \multicolumn{2}{c}{6}     && \multicolumn{2}{c}{7}     \\
  \cmidrule{4-5} \cmidrule{7-8} \cmidrule{10-11} \cmidrule{13-14} \cmidrule{16-17} \cmidrule{19-20}
  \multicolumn{2}{c|@{\,}}{EOC}                   && $u$  & $\vec q$  && $u$  & $\vec q$  && $u$  & $\vec q$  && $u$  & $\vec q$  && $u$  & $\vec q$  && $u$  & $\vec q$  \\
  \midrule
  \multirow{2}{*}{\rotatebox[origin=c]{90}{$p = 1$}}
  & $\tau = \tfrac{1}{h}$ && 0.8 & 0.5 && 1.6 & 0.8 && 1.9 & 1.0 && 2.0 & 1.0 && 2.0 & 1.0 && 2.0 & 1.0 \\
  & $\tau = 1$            && 0.8 & 0.5 && 1.6 & 0.8 && 1.9 & 1.0 && 2.0 & 1.0 && 2.0 & 1.0 && 2.0 & 1.0 \\
%   & $\tau = h$            && 0.8 & 0.5 && 1.6 & 0.8 && 1.9 & 1.0 && 2.0 & 1.0 && 2.0 & 1.0 && 2.0 & 1.0 \\
  \midrule
  \multirow{2}{*}{\rotatebox[origin=c]{90}{$p = 2$}}
  & $\tau = \tfrac{1}{h}$ && 3.0 & 1.5 && 3.0 & 1.8 && 3.0 & 1.9 && 3.0 & 2.0 && 3.0 & 2.0 && 3.0 & 2.0 \\
  & $\tau = 1$            && 3.0 & 1.5 && 3.0 & 1.8 && 3.0 & 1.9 && 3.0 & 2.0 && 3.0 & 2.0 && 3.0 & 2.0 \\
%   & $\tau = h$            && 2.8 & 1.5 && 3.0 & 1.8 && 3.0 & 1.9 && 3.0 & 2.0 && 3.0 & 2.0 && 3.0 & 2.0 \\
  \midrule
  \multirow{2}{*}{\rotatebox[origin=c]{90}{$p = 3$}}
  & $\tau = \tfrac{1}{h}$ && 4.0 & 2.8 && 4.2 & 2.9 && 4.2 & 3.0 && 4.0 & 3.0 && 4.0 & 3.0 && 4.0 & 3.0 \\
  & $\tau = 1$            && 3.1 & 2.8 && 3.9 & 2.9 && 4.0 & 3.0 && 4.0 & 3.0 && 4.0 & 3.0 && 4.0 & 3.0 \\
  \bottomrule
 \end{tabular}\vspace{1ex}
 \caption{Estimated orders of convergence (EOC) for primary unknown $u$ and secondary unknown $\vec q$ when the polynomial degree of the EDG method is $p$ and $u = \sin(2 \pi x) \sin(2 \pi y)$.}\label{TAB:hdg_eoc}
\end{table}
% 
% ---------------------------------------------------------------------
\section{Conclusions}
% ---------------------------------------------------------------------
% 
In the previous pages, we proposed a homogeneous multigrid method for EDG. We proved analytically that this method converges independently of the mesh size. Numerical examples have shown that the condition numbers are not only independent of the mesh size but also reasonably small. As as consequence, we have been enabled to efficiently solve linear systems of equations arising from EDG discretizations of arbitrary order.
% 
% ---------------------------------------------------------------------
\appendix\section{Used results}
% ---------------------------------------------------------------------
% 
Here, we summarize the results from other sources that we used in the proofs of our propositions.
\begin{lemma}\label{LEM:lem_35}
 Let $\mu$ be any function in $\skeletalSpace_\level$. The following statement holds:
 \begin{equation}
  \nnorm \sqrt{\tau_\level} (\localU_\level \mu - \mu) \nnorm_\level \lesssim \sqrt{ h_\level \tau_\level } \| \localQ_\level \mu \|_0.
 \end{equation}
 Thus, if $\tau_\level h_\level \lesssim 1$,
 \begin{equation}
  \| \localU_\level \mu - \mu \|_\level \lesssim h_\level \| \localQ_\level \mu \|_0.
 \end{equation}
\end{lemma}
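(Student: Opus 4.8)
The plan is to reduce both displayed inequalities to a single elementwise estimate and then to establish that estimate by a local identity together with a scaling argument. Concretely, since $\nnorm \localU_\level \mu - \mu \nnorm_\level^2 = \sum_{\elem \in \mesh_\level} \int_{\partial \elem} (\localU_\level \mu - \mu)^2 \ds$ while $\| \localU_\level \mu - \mu \|_\level^2 \cong h_\level \sum_{\elem} \int_{\partial \elem} (\localU_\level \mu - \mu)^2 \ds$ carries one extra factor $h_\level$, both claims follow at once (the second gaining the additional power of $h_\level$) from the elementwise bound
\[
 \int_{\partial \elem} (\localU_\level \mu - \mu)^2 \ds \lesssim h_\level \int_\elem |\localQ_\level \mu|^2 \dx \qquad \forall \elem \in \mesh_\level,
\]
with a constant independent of $\elem$ and $\level$. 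Here the $\sqrt{\tau_\level}$ weights in the first inequality simply cancel on both sides, so they play no essential role; they merely match the notation of the source.

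First I would extract a local identity. Testing \eqref{EQ:hdg_primary} with $\vec p_\elem \in \polynomials_p^d(\elem) = \vec W_\elem$ and integrating the term $\int_\elem \localU_\level \mu \, \Div \vec p_\elem \dx$ by parts yields, for all such $\vec p_\elem$,
\[
 \int_\elem (\localQ_\level \mu + \nabla \localU_\level \mu) \cdot \vec p_\elem \dx = \int_{\partial \elem} (\localU_\level \mu - \mu)\, \vec p_\elem \cdot \Nu \ds.
\]
Because $(\localU_\level \mu - \mu)|_\face \in \polynomials_p(\face)$ for every face and the normal-trace map $\polynomials_p^d(\elem) \to \prod_{\face \subset \partial \elem} \polynomials_p(\face)$ is surjective (these are exactly the normal-trace degrees of freedom of the $\mathrm{BDM}_p$ space), I can pick $\vec p_\elem$ with $\vec p_\elem \cdot \Nu = \localU_\level \mu - \mu$ on $\partial \elem$ and, by scaling to the reference cell, $\|\vec p_\elem\|_{L^2(\elem)} \lesssim h_\level^{1/2} \|\localU_\level \mu - \mu\|_{L^2(\partial \elem)}$. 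This gives $\|\localU_\level \mu - \mu\|_{L^2(\partial \elem)} \lesssim h_\level^{1/2} \|\localQ_\level \mu + \nabla \localU_\level \mu\|_{L^2(\elem)}$, so the whole problem reduces to absorbing the term $\nabla \localU_\level \mu$ into $\localQ_\level \mu$.

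This last reduction is the hard part, and here the naive route fails: controlling $\nabla \localU_\level \mu$ through the same identity (or through the trace/inverse inequalities and \cite[Lem.~3.3]{ChenLX2014}) only recouples $\|\nabla \localU_\level \mu\|_\elem$ with $h_\level^{-1/2}\|\localU_\level \mu - \mu\|_{\partial \elem}$, producing a circular estimate that does not close. The decisive point is that one must use \emph{both} local equations \eqref{EQ:hdg_primary} and \eqref{EQ:hdg_flux} simultaneously: the primary equation alone leaves $\nabla \localU_\level \mu$ underdetermined, and the flux equation alone only tests $\localU_\level \mu - \mu$ against traces of $\polynomials_p(\elem)$, which do not span $\prod_\face \polynomials_p(\face)$. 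My plan is therefore to argue by scaling to the reference element, where $(\localU_\level \mu, \localQ_\level \mu)$ and $\mu$ range over fixed finite-dimensional spaces and the scaled penalty $\hat\tau = \tau_\level h_\level$ stays in a bounded range under the hypothesis $\tau_\level h_\level \lesssim 1$. On the reference cell one verifies the injectivity statement that $\localQ_\level \mu = 0$ forces $\localU_\level \mu = \mu$ on $\partial \elem$ (a computation combining both equations, which I have checked explicitly in the lowest-order case), whence by equivalence of norms on the finite-dimensional space $\|\hat u - \hat\mu\|_{L^2(\partial \hat\elem)} \lesssim \|\hat{\vec q}\|_{L^2(\hat\elem)}$ uniformly for admissible $\hat\tau$; scaling back inserts the factor $h_\level^{1/2}$ and delivers the elementwise bound. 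The two obstacles I expect to require the most care are verifying the kernel/injectivity claim for general $p$ and $d$, and confirming that the reference-cell constant stays uniform across the admissible range of $\hat\tau$.
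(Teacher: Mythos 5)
Your reduction to the elementwise bound, the BDM-type normal-trace lifting giving $\|\localU_\level\mu-\mu\|_{L^2(\partial\elem)}\lesssim h_\level^{1/2}\,\|\localQ_\level\mu+\nabla\localU_\level\mu\|_{L^2(\elem)}$, and your diagnosis that the naive absorption of $\nabla\localU_\level\mu$ is circular are all sound. The genuine gap is exactly the second obstacle you flag and then leave open: uniformity of the reference-cell constant in $\hat\tau=\tau_\level h_\level$. The hypothesis $\tau_\level h_\level\lesssim 1$ does \emph{not} keep $\hat\tau$ away from zero --- the paper itself runs $\tau_\level=1$, so $\hat\tau=h_\level\to 0$ across the very levels to which the lemma is applied --- and at $\hat\tau=0$ the local solver with $V_\elem=\polynomials_p$, $\vec W_\elem=\polynomials_p^d$ is singular: for $\mu=0$, \eqref{EQ:hdg_flux} only enforces $\Div\vec q_\elem=0$, then \eqref{EQ:hdg_primary} tested with $\vec p_\elem=\vec q_\elem$ gives $\vec q_\elem=0$, and the remaining kernel is $\{(\vec 0,u):u\in\polynomials_p,\ u\perp\polynomials_{p-1}\text{ in }L^2(\elem)\}$, which is nontrivial. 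Consequently the solution operator does not extend continuously to $\hat\tau=0$, ``equivalence of norms on a finite-dimensional space'' only yields a constant $C(\hat\tau)$ for each fixed $\hat\tau>0$, and compactness over the closed parameter interval is unavailable. Closing the argument would require a quantitative treatment of the degenerate limit, e.g.\ an expansion $u=\hat\tau^{-1}u^{-1}+u^{0}+\dots$ together with a proof that the blow-up mode vanishes (it does: one finds $u^{-1}\perp\polynomials_{p-1}$ in $L^2(\elem)$ and $\langle u^{-1},v\rangle_{\partial\elem}=-(\Div\vec q^{\,0},v)_\elem$ for all $v\in\polynomials_p(\elem)$, so taking $v=u^{-1}$ and using $\Div\vec q^{\,0}\in\polynomials_{p-1}\perp u^{-1}$ forces $u^{-1}|_{\partial\elem}=0$, whence $u^{-1}$ is an interior bubble, which cannot be $L^2$-orthogonal to $\polynomials_{p-1}$ unless it is zero) --- but none of this is in your proposal. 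Note also a scope mismatch: even completed, the bounded-$\hat\tau$ argument proves the first inequality only under $\tau_\level h_\level\lesssim 1$, whereas the lemma asserts it for arbitrary $\tau_\level$ (the paper, to be fair, only ever uses the restricted regime).

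Two further points. First, your kernel claim is much easier for EDG than you anticipate, and your assertion that \eqref{EQ:hdg_flux} alone cannot suffice is wrong in this setting: since $\mu\in\skeletalSpace_\level$ is continuous, $(\localU_\level\mu-\mu)|_{\partial\elem}$ is a \emph{continuous} piecewise-$\polynomials_p$ function on $\partial\elem$, and on a simplex this space coincides with the trace space $\traceOp(\polynomials_p(\elem))$, because the boundary Lagrange degrees of freedom of $\polynomials_p(\elem)$ can be prescribed freely. So $\localQ_\level\mu=0$ turns \eqref{EQ:hdg_flux} into $\langle\localU_\level\mu-\mu,v\rangle_{\partial\elem}=0$ for all $v\in\polynomials_p(\elem)$, and testing with a $v$ whose trace equals $\localU_\level\mu-\mu$ settles injectivity for all $p$ and $d$; your objection about $\prod_{\face}\polynomials_p(\face)$ applies to HDG's discontinuous skeletal space, not to EDG. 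Second, for comparison: the paper does not prove this lemma from scratch at all --- it imports the explicit local estimates \cite[Lem.~3.4 (iv) and (v)]{CockburnDGT2013} for HDG, which apply verbatim because $\skeletalSpace_\level$ is a subspace of the HDG skeletal space, and whose constants are explicit and hence uniform in $\tau_\level$; that route avoids your compactness issue entirely, while your approach, if the $\hat\tau\to 0$ degeneracy were handled, would make the manuscript self-contained.
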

\begin{proof}
 The first inequality is \cite[Lemma 3.4 (iv)]{CockburnDGT2013} whose right hand side is estimated using \cite[Lemma 3.4 (v)]{CockburnDGT2013}. The second inequality follows after multiplication with $h_\level$ and exploiting the definitions of $\nnorm \cdot \nnorm_\level$ and $\| \cdot \|_\level$.
\end{proof}
\begin{lemma}\label{LEM:thm_31}
 If $\tau_\level h_\level \lesssim 1$, the local solution operators obeys
 \begin{equation}
  \| \localU_\level \mu \|_0 \lesssim \| \mu \|_\level
 \end{equation}
\end{lemma}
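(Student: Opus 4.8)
The plan is to prove the bound by testing the local solver against its own solution to produce an energy identity that controls the flux $\localQ_\level\mu$, and then to bridge from the skeletal data $\mu$ to the interior $L^2$ norm by a scaled Poincaré--trace inequality. The conceptual difficulty is that $\|\localU_\level\mu\|_0$ is a \emph{bulk} quantity, whereas $\mu$ lives only on the skeleton: a cell bubble may vanish on $\partial\elem$ while being large in the interior, so the interior norm cannot be bounded by boundary values alone and must instead be recovered through the gradient of $\localU_\level\mu$, which in turn is controlled by the flux.

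First I would derive the cellwise energy identity. Writing $u_\elem = \localU_\level\mu$ and $\vec q_\elem = \localQ_\level\mu$, I test \eqref{EQ:hdg_primary} with $\vec p_\elem = \vec q_\elem$ and \eqref{EQ:hdg_flux} with $v_\elem = u_\elem$; after integrating by parts so that the interior coupling terms cancel, this gives
\begin{equation*}
 \| \vec q_\elem \|^2_{L^2(\elem)} + \tau_\level \| u_\elem \|^2_{L^2(\partial\elem)}
 = - \int_{\partial\elem} \mu\, \vec q_\elem \cdot \Nu \ds + \tau_\level \int_{\partial\elem} \mu\, u_\elem \ds.
\end{equation*}
Summing over $\elem \in \mesh_\level$, bounding the first right-hand term by the discrete trace inequality $\nnorm \vec q_\elem\cdot\Nu\nnorm_\level \lesssim h_\level^{-1/2}\|\localQ_\level\mu\|_0$, and using Young's inequality to absorb both the $\|\localQ_\level\mu\|_0$ and the $\tau_\level\nnorm u_\elem\nnorm_\level$ contributions, I arrive at $\|\localQ_\level\mu\|_0^2 \lesssim h_\level^{-1}\nnorm\mu\nnorm_\level^2 + \tau_\level\nnorm\mu\nnorm_\level^2$. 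Here the hypothesis $\tau_\level h_\level \lesssim 1$ enters crucially, giving $\tau_\level \lesssim h_\level^{-1}$, so that with $\|\mu\|_\level^2 \cong h_\level\nnorm\mu\nnorm_\level^2$ one obtains the sharp bound $\|\localQ_\level\mu\|_0 \lesssim h_\level^{-1}\|\mu\|_\level$.

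Next I would apply the scaled Poincaré--trace inequality $\|u_\elem\|_{L^2(\elem)}^2 \lesssim h_\level^2 |u_\elem|_{H^1(\elem)}^2 + h_\level\|u_\elem\|_{L^2(\partial\elem)}^2$, valid for polynomials on a shape-regular cell by scaling to the reference element. Summed over cells this yields
\begin{equation*}
 \|\localU_\level\mu\|_0^2 \lesssim h_\level^2\, |\localU_\level\mu|_{1,\mesh_\level}^2 + h_\level\, \nnorm\localU_\level\mu\nnorm_\level^2.
\end{equation*}
The broken gradient is controlled via $|\localU_\level\mu|_{1,\mesh_\level} \le \|\localQ_\level\mu\|_0 + \|\localQ_\level\mu + \nabla\localU_\level\mu\|_0 \lesssim \|\localQ_\level\mu\|_0$, where I combine \cite[Lem.~3.3]{ChenLX2014} with the second inequality of Lemma \ref{LEM:lem_35}; the boundary term is controlled via $\nnorm\localU_\level\mu\nnorm_\level \le \nnorm\localU_\level\mu - \mu\nnorm_\level + \nnorm\mu\nnorm_\level \lesssim h_\level^{1/2}\|\localQ_\level\mu\|_0 + \nnorm\mu\nnorm_\level$, using the first inequality of Lemma \ref{LEM:lem_35}. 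Substituting the energy bound $\|\localQ_\level\mu\|_0 \lesssim h_\level^{-1}\|\mu\|_\level$ renders every term $\lesssim \|\mu\|_\level^2$, so that $\|\localU_\level\mu\|_0 \lesssim \|\mu\|_\level$.

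I expect the first step to be the main obstacle: one must extract the flux with \emph{exactly} the scaling $\|\localQ_\level\mu\|_0 \lesssim h_\level^{-1}\|\mu\|_\level$, and it is precisely here that the stabilization assumption $\tau_\level h_\level \lesssim 1$ is needed to absorb the penalty contribution in the energy estimate. Once the flux is controlled with this sharp power of $h_\level$, the factor $h_\level^2$ in the Poincaré inequality exactly compensates the $h_\level^{-1}$, and the remaining manipulations are routine.
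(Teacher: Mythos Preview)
Your argument is correct, but it takes a genuinely different route from the paper. The paper does not actually prove this lemma: its entire proof is a one-line citation of Theorem~3.1 in \cite{CockburnDGT2013}, together with the observation that the constant there becomes independent of $h_\level$ once $\tau_\level h_\level \lesssim 1$ (and that $\skeletalSpace_\level$ sits inside the HDG skeletal space, so the HDG result applies verbatim).

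Your proof instead reconstructs the mechanism. You test the local solver against its own output to obtain the cellwise energy identity, extract the sharp flux bound $\|\localQ_\level\mu\|_0 \lesssim h_\level^{-1}\|\mu\|_\level$ (this is exactly where $\tau_\level h_\level \lesssim 1$ is consumed), and then pass from the skeleton to the bulk via the scaled Poincar\'e--trace inequality, controlling the broken gradient through \cite[Lem.~3.3]{ChenLX2014} and Lemma~\ref{LEM:lem_35}. This buys transparency: the reader sees precisely why the stabilization hypothesis is needed and how the powers of $h_\level$ balance. The paper's approach buys brevity and avoids re-deriving a result already established in the HDG literature. Note that your proof is not fully self-contained either, since Lemma~\ref{LEM:lem_35} is itself proved by citation to \cite{CockburnDGT2013}; but the cited Lemma~3.4 there is a local-solver estimate independent of the Theorem~3.1 you are reproving, so there is no circularity.
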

\begin{proof}
 This is Theorem 3.1 in \cite{CockburnDGT2013}, where we use that the constant becomes independent of $h_\level$ if $\tau_\level h_\level \lesssim 1$.
\end{proof}
% 
% \begin{lemma}\label{LEM:th_23}
%  Let $\lambda_\level$ be the EDG approximation of the solution $u$, $\tau_\level \cong h^{-1}_\level$, and the domain be $H^2$ regular. Then,
%  \begin{align}
%   \| \discProj_\level (u - \localU_\level \lambda_\level) \|_0 + \| \skeletalProj_\level u - \lambda_\level \|_\level \lesssim h_\level \| \localQ_\level \lambda + \nabla u \|_0,
%  \end{align}
%  and if $u \in H^{k+2}(\Omega)$, we have
%  \begin{equation}
%   \| \localQ_\level \lambda + \nabla u \|_0 \lesssim  h_\level^k |u|_{k+2}
%  \end{equation}
% \end{lemma}
% % 
% \begin{proof}
%  The first inequality is \cite[Theo.~2.3]{CockburnGSS2009}, while the second inequality is \cite[Theo.~2.1]{CockburnGSS2009}.
% \end{proof}
% 
\begin{lemma}[Lemma 4.3 in \cite{LuRK2020}]
  \label{LEM:lem_43}
 The DG reconstructions of the injection operator admits the estimate
 \begin{equation}
   \| \localU_{\level-1} \mu - \localU_\level \injectionOp_\level \mu \|_0 \lesssim h_\level \| \mu \|_{a_{\level-1}},
   \qquad
   \forall \mu \in \skeletalSpace_{\level-1}.
 \end{equation}
\end{lemma}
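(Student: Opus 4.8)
The plan is to compare both discontinuous reconstructions against a single globally continuous reference function, namely $w := \extensionOp_{\level-1}\mu \in \contElementSpace_{\level-1}\subset\contElementSpace_\level$, and then estimate each deviation separately. Since $\injectionOp_\level\mu = \traceOp w$ (the embedding $I^c_\level$ being the identity), a triangle inequality reduces the claim to the two bounds
\begin{equation*}
 \| \localU_{\level-1}\mu - w \|_0 \lesssim h_\level \|\mu\|_{a_{\level-1}}
 \qquad\text{and}\qquad
 \| w - \localU_\level \injectionOp_\level\mu \|_0 \lesssim h_\level \|\mu\|_{a_{\level-1}}.
\end{equation*}
Throughout I would assume $\tau_\level \cong h_\level^{-1}$, as required for Lemmas \ref{LEM:lem_35} and \ref{LEM:lifting_bound}, and I would repeatedly use $h_\level \cong h_{\level-1}$ (a consequence of $c_\text{ref}$) together with $\|\localQ_{\level-1}\mu\|_0 \le \|\mu\|_{a_{\level-1}}$, which is immediate from the definition of $a_{\level-1}$.

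For the coarse deviation I would exploit that, by construction of $\extensionOp_{\level-1}$, the polynomial $(w - \localU_{\level-1}\mu)|_\elem$ vanishes at every interior support point of each cell $\elem\in\mesh_{\level-1}$; hence the scaling and finite-dimensional equivalence argument that opens the proof of Lemma \ref{LEM:lifting_bound} yields $\| w - \localU_{\level-1}\mu \|_0 \lesssim \| w - \localU_{\level-1}\mu \|_{\level-1}$. On the skeleton one has $w = \mu$, so the right-hand side equals $\| \mu - \localU_{\level-1}\mu \|_{\level-1}$, which Lemma \ref{LEM:lem_35} bounds by $h_{\level-1}\|\localQ_{\level-1}\mu\|_0 \lesssim h_\level\|\mu\|_{a_{\level-1}}$.

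The fine deviation is the heart of the matter. Here I would establish a reproduction-up-to-a-Laplacian identity
\begin{equation*}
 \localU_\level \injectionOp_\level\mu = w + \localU_\level g,
 \qquad
 g|_\elem := \Delta\bigl(w|_\elem\bigr) \text{ on each } \elem\in\mesh_\level,
\end{equation*}
where $\localU_\level g$ denotes the local solver \eqref{EQ:hdg_f} with right-hand side $g$. To see this, note that $w|_\elem \in \polynomials_p$ and $-\nabla w|_\elem \in \polynomials_p^d$, so the pair $(w, -\nabla w)$ is admissible in \eqref{EQ:hdg_scheme}; integrating by parts, one checks that it satisfies \eqref{EQ:hdg_primary} exactly (because $w = \injectionOp_\level\mu$ on $\partial\elem$), but fulfils \eqref{EQ:hdg_flux} only up to the defect $-(\Delta w, v)_\elem$. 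Subtracting, the difference solves precisely the local problem \eqref{EQ:hdg_f} with data $g$, which gives the identity and hence $\| w - \localU_\level\injectionOp_\level\mu \|_0 = \|\localU_\level g\|_0$.

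It then remains to bound $\|\localU_\level g\|_0$. The elementwise Laplacian of the degree-$p$ polynomial $w$ obeys the inverse estimate $\|g\|_0 \lesssim h_\level^{-1}|w|_1$, while the $H^1$-stability of the extension — estimate \eqref{EQ:extension_bound} read on level $\level-1$ — gives $|w|_1 = |\extensionOp_{\level-1}\mu|_1 \lesssim \|\localQ_{\level-1}\mu\|_0 \le \|\mu\|_{a_{\level-1}}$. The last ingredient, and the step I expect to be the main obstacle, is an $L^2$-stability estimate for the local solver of the form $\|\localU_\level g\|_0 \lesssim h_\level^2 \|g\|_0$. This is the dimensionally correct elliptic scaling (since $\Delta u = g$ forces $u \sim h^2 g$) and I would prove it by transforming \eqref{EQ:hdg_f} to a reference cell, where the choice $\tau_\level h_\level \cong 1$ renders the local problem $h$-uniform and well posed, and then scaling back; the delicate point is to verify uniform coercivity of the local bilinear form on the reference cell and the correct scaling of the $\tau$-dependent boundary terms. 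Combining the three estimates yields $\|\localU_\level g\|_0 \lesssim h_\level^2 \cdot h_\level^{-1}\|\mu\|_{a_{\level-1}} = h_\level\|\mu\|_{a_{\level-1}}$, which closes the fine estimate and, with the coarse one, proves the lemma.
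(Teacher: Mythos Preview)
The paper does not prove this lemma at all: it is quoted verbatim from \cite{LuRK2020} and placed in the appendix of borrowed results, without argument. So there is no ``paper's own proof'' to compare against; the relevant question is only whether your argument is sound.

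Your route is correct. The triangle inequality against $w=\extensionOp_{\level-1}\mu$ is natural, and both halves go through as you describe. The coarse piece is exactly the opening line of the proof of Lemma~\ref{LEM:lifting_bound} (at level $\level-1$) combined with Lemma~\ref{LEM:lem_35}. For the fine piece your reproduction identity is right: on each $\elem\in\mesh_\level$ the pair $(w,-\nabla w)$ lies in $\polynomials_p\times\polynomials_p^d$, satisfies~\eqref{EQ:hdg_primary} exactly with boundary datum $\injectionOp_\level\mu=\gamma_\level w$, and satisfies~\eqref{EQ:hdg_flux} up to the bulk defect $-(\Delta w,v)_\elem$; linearity of the local solver then gives $w-\localU_\level\injectionOp_\level\mu=-\localU_\level(\Delta w)$. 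The final ingredient $\|\localU_\level g\|_0\lesssim h_\level^2\|g\|_0$ for the source local solver with zero skeletal data is also fine: mapping to a reference cell sends $\tau_\level\cong h_\level^{-1}$ to an $O(1)$ penalty, the reference local problem is well posed uniformly, and the volume right-hand side picks up exactly a factor $h_\level^2$ relative to the other terms under the change of variables, so scaling back yields the claimed bound. Together with the inverse estimate $\|\Delta w\|_0\lesssim h_\level^{-1}|w|_1$ and~\eqref{EQ:extension_bound} at level $\level-1$ this closes the argument.
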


\bibliographystyle{alpha}
\bibliography{MultigridEDG}
\end{document}